\numberwithin{equation}{section}
\newcommand{\hgline}[2]{
	\pgfmathsetmacro{\thetaone}{#1}
	\pgfmathsetmacro{\thetatwo}{#2}
	\pgfmathsetmacro{\theta}{(\thetaone+\thetatwo)/2}
	\pgfmathsetmacro{\phi}{abs(\thetaone-\thetatwo)/2}
	\pgfmathsetmacro{\close}{less(abs(\phi-90),0.0001)}
	\ifdim \close pt = 11pt
	\draw[red] (\theta+180:1) -- (\theta:1);
	\else
	\pgfmathsetmacro{\R}{tan(\phi)}
	\pgfmathsetmacro{\distance}{sqrt(1+\R^2)}
	\draw[red] (\theta:\distance) circle (\R);
	\fi
}
\newtheorem{prop}{Proposition}[section]
\newtheorem{theorem}[prop]{Theorem}
\newcommand\smallO{
	\mathchoice
	{{\scriptstyle\mathcal{O}}}
	{{\scriptstyle\mathcal{O}}}
	{{\scriptscriptstyle\mathcal{O}}}
	{\scalebox{.7}{$\scriptscriptstyle\mathcal{O}$}}
}
\def\hyper@x#1,#2\relax{#1}
\def\hyper@y#1,#2\relax{#2}
\def\hyper@coords#1{#1}
\newif\ifhyper@vertical
\def\hyper@computer#1#2{%
	\edef\hyper@toscan{(#1)}
	\tikz@scan@one@point\hyper@coords\hyper@toscan
	\edef\hyper@sx{\the\pgf@x}
	\edef\hyper@sy{\the\pgf@y}
	\edef\hyper@toscan{(#2)}
	\tikz@scan@one@point\hyper@coords\hyper@toscan
	\edef\hyper@ex{\the\pgf@x}
	\edef\hyper@ey{\the\pgf@y}
	\pgfmathsetmacro{\hyper@mx}{(\hyper@ex + \hyper@sx)/2}
	\pgfmathsetmacro{\hyper@my}{(\hyper@ey + \hyper@sy)/2}
	\pgfmathsetmacro{\hyper@dx}{\hyper@ex - \hyper@sx}
	\pgfmathparse{\hyper@dx == 0 ? "\noexpand\hyper@verticaltrue" : "\noexpand\hyper@verticalfalse"}
	\pgfmathresult
	\ifhyper@vertical
	\edef\hyper@cmd{-- (\tikztotarget)}
	\else
	\pgfmathsetmacro{\hyper@dy}{\hyper@ey - \hyper@sy}
	\pgfmathsetmacro{\hyper@t}{\hyper@my/\hyper@dx}
	\pgfmathsetmacro{\hyper@cx}{\hyper@mx + \hyper@t * \hyper@dy}
	\pgfmathsetmacro{\hyper@radius}{veclen(\hyper@cx - \hyper@sx, \hyper@sy)}
	\pgfmathsetmacro{\hyper@sangle}{180 - atan2(\hyper@sy,\hyper@cx-\hyper@sx)}
	\pgfmathsetmacro{\hyper@eangle}{180 - atan2(\hyper@ey,\hyper@cx-\hyper@ex)}
	\edef\hyper@cmd{arc[radius=\hyper@radius pt, start angle=\hyper@sangle, end angle=\hyper@eangle]}
	\fi
}
\def\hyper@disc@computer#1#2{%
	\edef\hyper@toscan{(#1)}
	\tikz@scan@one@point\hyper@coords\hyper@toscan
	\edef\hyper@sx{\the\pgf@x}
	\edef\hyper@sy{\the\pgf@y}
	\edef\hyper@toscan{(#2)}
	\tikz@scan@one@point\hyper@coords\hyper@toscan
	\edef\hyper@ex{\the\pgf@x}
	\edef\hyper@ey{\the\pgf@y}
	\pgfmathsetmacro{\hyper@det}{\hyper@sx * \hyper@ey - \hyper@sy * \hyper@ex}
	\pgfmathparse{\hyper@det == 0 ? "\noexpand\hyper@verticaltrue" : "\noexpand\hyper@verticalfalse"}
	\pgfmathresult
	\ifhyper@vertical
	\edef\hyper@cmd{-- (\tikztotarget)}
	\else
	\pgfmathsetmacro{\hyper@mx}{(\hyper@ex + \hyper@sx)/2}
	\pgfmathsetmacro{\hyper@my}{(\hyper@ey + \hyper@sy)/2}
	\pgfmathsetmacro{\hyper@dx}{\hyper@ex - \hyper@sx}
	\pgfmathsetmacro{\hyper@dy}{\hyper@ey - \hyper@sy}
	\pgfmathsetmacro{\hyper@dradius}{\pgfkeysvalueof{/tikz/hyperbolic disc radius}}
	\pgfmathsetmacro{\hyper@t}{((\hyper@dradius)^2 - \hyper@sx * \hyper@ex - \hyper@sy * \hyper@ey)/(2 * (\hyper@sx * \hyper@ey - \hyper@sy * \hyper@ex))}
	\pgfmathsetmacro{\hyper@radius}{sqrt((\hyper@t)^2 + .25) * veclen(\hyper@dx,\hyper@dy)}
	\pgfmathsetmacro{\hyper@cx}{\hyper@mx + \hyper@t * \hyper@dy}
	\pgfmathsetmacro{\hyper@cy}{\hyper@my - \hyper@t * \hyper@dx}
	\pgfmathsetmacro{\hyper@sangle}{atan2(\hyper@sy-\hyper@cy,\hyper@sx - \hyper@cx)}
	\pgfmathsetmacro{\hyper@eangle}{atan2(\hyper@ey-\hyper@cy,\hyper@ex - \hyper@cx)}
	\pgfmathsetmacro{\hyper@eangle}{\hyper@eangle > \hyper@sangle + 180 ? \hyper@eangle - 360 : \hyper@eangle}
	\edef\hyper@cmd{arc[radius=\hyper@radius pt, start angle=\hyper@sangle, end angle=\hyper@eangle]}
	\fi
}
\def\hyper@plane@tangent#1#2{%
	\edef\hyper@toscan{(#1)}
	\tikz@scan@one@point\hyper@coords\hyper@toscan
	\edef\hyper@sx{\the\pgf@x}
	\edef\hyper@sy{\the\pgf@y}
	\edef\hyper@toscan{(#2)}
	\tikz@scan@one@point\hyper@coords\hyper@toscan
	\edef\hyper@ex{\the\pgf@x}
	\edef\hyper@ey{\the\pgf@y}
	\pgfmathsetmacro{\hyper@ex}{\hyper@ex - \hyper@sx}
	\pgfmathsetmacro{\hyper@ey}{\hyper@ey - \hyper@sy}
	\pgfmathparse{\hyper@ex == 0 ? "\noexpand\hyper@verticaltrue" : "\noexpand\hyper@verticalfalse"}
	\pgfmathresult
	\ifhyper@vertical
	\pgfmathsetmacro{\hyper@d}{\hyper@ey/1cm}
	\pgfmathsetmacro{\hyper@radius}{\hyper@sy * exp(\hyper@d) - \hyper@sy}
	\edef\hyper@cmd{-- ++(0,\hyper@radius pt)}
	\else
	\pgfmathsetmacro{\hyper@d}{\hyper@ex > 0 ? veclen(\hyper@ex,\hyper@ey) : -veclen(\hyper@ex,\hyper@ey)}
	\pgfmathsetmacro{\hyper@radius}{abs(\hyper@sy * \hyper@d / \hyper@ex)}
	\pgfmathsetmacro{\hyper@sangle}{90 + atan(\hyper@ey/\hyper@ex)}
	\pgfkeysgetvalue{/tikz/hyperbolic plane target angle}{\hyper@eangle}
	\ifx\hyper@eangle\pgfutil@empty
	\pgfmathsetmacro{\hyper@d}{\hyper@d/1cm}
	\pgfmathsetmacro{\hyper@ey}{\hyper@ey/1cm}
	\pgfmathsetmacro{\hyper@tanhd}{tanh(\hyper@d)}
	\pgfmathsetmacro{\hyper@eangle}{acos((\hyper@d * \hyper@tanhd - \hyper@ey)/(\hyper@d - \hyper@ey * \hyper@tanhd))}
	\fi
	\edef\hyper@cmd{arc[radius=\hyper@radius pt, start angle=\hyper@sangle, end angle=\hyper@eangle]}
	\fi
}
\tikzset{%
	hyperbolic disc radius/.initial={1cm},
	hyperbolic plane/.style={
		to path={
			\pgfextra{\hyper@computer\tikztostart\tikztotarget}
			\hyper@cmd
		}
	},
	hyperbolic plane tangent/.style={
		to path={
			\pgfextra{\hyper@plane@tangent\tikztostart\tikztotarget}
			\hyper@cmd
		}
	},
	hyperbolic disc/.style={
		to path={
			\pgfextra{\hyper@disc@computer\tikztostart\tikztotarget}
			\hyper@cmd
		}
	},
	hyperbolic plane target angle/.initial={},
}
\begin{document}
	
\title[Sharply estimating hyperbolic capacities]{Sharply estimating hyperbolic capacities}
\author{Xiaoshang Jin}
\address{School of Mathematics and Statistics, Huazhong University of Science and Technology, Wuhan, Hubei 430074, China}
	\email{jinxs@hust.edu.cn}
\author{Jie Xiao}
\address{Department of Mathematics \& Statistics,
	Memorial University, St. John's, NL A1C 5S7, Canada}
\email{jxiao@math.mun.ca}

\thanks{The first-named and second-named authors were supported by {NNSF of China
	\# 12201225} and NSERC of Canada \# 202979 respectively.}

\date{}

\keywords{hyperbolic surface-area/capacity/volume}
\subjclass[2010]{31B15, 49Q10, 53C21, 74G65}

\maketitle


\begin{abstract} This paper is devoted to establishing
four types of sharp capacitary inequalities within the hyperbolic space as detailed in Theorems \ref{t21}-\ref{t31}-\ref{t41}-\ref{t51}.
\end{abstract}

\tableofcontents

\section{Hyperbolic capacities of general condensers}\label{s1} 
Suppose that $\mathbb H^{n+1}$ is the hyperbolic $(n+1)$-space with the Poincar\'e disk model 
\begin{center}
\begin{tikzpicture}[hyperbolic disc radius=1.3cm,
every to/.style={hyperbolic disc}]
\draw (0,0) circle[radius=\pgfkeysvalueof{/tikz/hyperbolic disc radius}];
\pgfmathsetmacro{\hyperrad}{1/(2*sin(22.5))}
\fill[brown] (-112.5:\hyperrad) \foreach \k in {0,...,7} { to ++(\k * 45:1)};
\coordinate (b) at (1.2,.3);
\coordinate (a) at (.8,-.4);
\fill (a) circle[radius=1pt];
\fill (b) circle[radius=1pt];
\draw (a) to (b);
\end{tikzpicture}
\end{center}
which is equipped with the Riemannian metric 
$$g=(g_{ij})\ \ \text{where}\ \
g_{ij}(x)=\Bigg(\frac{2}{1-|x|^2}\Bigg)^2\delta_{ij}(x)
$$
living in the open unit ball $\mathbb B^{n+1}$ of the Euclidean space $\mathbb R^{n+1}$. If  
$$
d_g(x,y)=\rm{arcosh}\left(\frac{(1+|x|^2)(1+|y|^2)-4x\cdot y}{(1-|x|^2)(1-|y|^2)}\right)\ \ \forall\ \ x,y\in\mathbb B^{n+1}
$$
stands for the hyperbolic distance between $x\ \& \ y$, then 
$$
B_x(r)=\big\{y\in\mathbb H^{n+1}:\ d_g(x,y)<r\big\}
$$
represents $\mathbb B^{n+1}\ni x$-centered hyperbolic geodesic ball with radius $r>0$, and hence 
$$
\partial B_x(r)= \big\{y\in\mathbb H^{n+1}:\ d_g(x,y)=r\big\}
$$
is the $\mathbb B^{n+1}\ni x$-centered hyperbolic geodesic sphere.  

Moreover, for any Borel set $B\subseteq\mathbb B^{n+1}$ with its boundary $\partial B$ the hyperbolic voulme $\upsilon_g$ and the hyperbolic surface-area $\sigma_g$ are defined by
$$
\upsilon_g(B)=\int_B\left(\frac{2}{1-|x|^2}\right)^{n+1}\,d\upsilon(x)\ \ \text{and}\ \ \sigma_g(\partial B)=\int_{\partial B}\left(\frac{2}{1-|x|^2}\right)^{n}\,d\sigma(x),
$$
where $d\upsilon$ and $d\sigma$ denote the $(n+1)$-dimensional volume (Lebesgue measure) element and the $n$-dimensional surface-area (Hausdorff measure) element respectively. In particular, we have
\begin{equation}
\label{1.1}
\begin{cases}
\upsilon_g(B_0(r))=\sigma_{n}\int_0^r\sinh^n t\,dt;\\
\sigma_g\big(\partial B_0(r)\big)=\sigma_{n}\sinh^n r;\\
B_0(r)\equiv B(r)=\text{the origin-centered geodesic ball with radius}\ r;\\
\sigma_n=\text{the surface-area of}\ \mathbb B^{n+1}.
\end{cases}
\end{equation}

According to \cite{BDS}, a function $f\in L^1_{loc}(\mathbb H^{n+1})$ is of locally bounded variation amounts to the total variation of $f$ on  $U\Subset\mathbb H^{n+1}$ is finite - namely -
$$
\|Df\|(U)=\sup\Bigg\{\int_U f\text{div}\Phi\,d\upsilon_g:\ \Phi\in C_0^1(U;\mathbb R^{n+1})\ \&\ \sup_{x\in U}|\Phi(x)|\le 1\Bigg\}<\infty.
$$
Especially, if $f$ is of $C^1$-class, then
\begin{equation}
\label{1.2}
\|Df\|(U)=\int_U|\nabla f|\,d\upsilon_g\ \ \forall\ \ U\Subset\mathbb H^{n+1}.
\end{equation}

In the above and below, $\text{div}\Phi$ and $|\nabla f|$ stand for the divergence of $\Phi$ and the norm of the gradient $\nabla f$ of $f$  as with respect to the previously-defined metric $g$.
Our starting point then is to extend \eqref{1.2} through considering the following minimal $(0,\infty)\ni p$-energy -i.e.- the hyperbolic $p$-capacity:
\begin{equation}
\label{{1.3}}
{{\rm cap}_p(K,O)}=\inf\Bigg\{\int_{O}|\nabla f|^p\,d\upsilon_g:\ C_0^1(O)\ni f\ge 1_K\Bigg\}
\end{equation}
for any open set $O\subseteq\mathbb H^{n+1}$ and any compact subset $K$ of $O$ with $1_K$ being its characteristic function. Physically speaking,  $(K,O)$ exists as a general condenser of the capacitance ${\rm cap}_p(K,O)$ within  $\mathbb H^{n+1}$. However, ${\rm cap}_{0<p\le 1}(K,O)$
deserves special attention thanks to 
 \begin{equation}
\label{{1.4}}
\begin{cases}
{\rm cap}_{0<p<1}(K,O)=0;\\
{\rm cap}_1(K,O)=\inf \big\{\sigma_g(\partial\Omega):\ K\subseteq\text{smooth}\ \Omega\subseteq O\big\}.
\end{cases}
\end{equation}

As a matter of fact, upon not only choosing such a smooth domain $\Omega\Subset\mathbb H^{n+1}$ that $K\subset \Omega\subseteq O$ but also setting
$$
\begin{cases}
d_g(\Omega,\cdot)=\text{ the hyperbolic distance function of}\ \Omega;\\
\big\{x\in\mathbb H^{n+1}:\ d_g(\Omega, x)\leq R_0\big\}\subset O\ \ \text{for some}\ \ R_0>0;\\
f(x)=\max\Big\{1-\big(R_0^{-1}{d_g(\Omega,x)}\big)^{\rm m},0\Big\}\ \forall\ x\in O;\\
 {\rm m}\gg 1>p>0,
 \end{cases}
 $$
we get
\begin{align*}
{\rm cap}_p(K,O)&\leq \int_O|\nabla f|^p d\upsilon_g\\
&=\int_0^{R_0} {\rm m}^p\big({s}{R^{-1}_0}\big)^{({\rm m}-1)p}\sigma_g\Big(\big\{x\in\mathbb H^{n+1}:\ d_g(\Omega, x)=R_0\big\}\Big)\,ds\\
&\leq \max\limits_{s\in[0,R_0]}{\sigma}_g\Big(\big\{x\in\mathbb H^{n+1}:\ d_g(\Omega, x)= R_0\big\}\Big)\left(\frac{R_0{\rm m}^p}{1+({\rm {m}}-1)p}\right)\\
&\rightarrow 0\ \ {\rm as}\ \ {\rm m}\rightarrow\infty,
\end{align*}
thereby reaching \eqref{{1.4}}'s first formula. Needless to say, \eqref{{1.4}}'s second formula is a consequence of \eqref{1.2}\&\eqref{{1.3}}.

Our main interest in these hyperbolic capacities (cf. \cite{jin2022relative, li2023sharp, XAP} addressing some of the very basic aspects of ${\rm cap}_{1<p<\infty}(K,\mathbb H^{n+1})$) lies with not only the optimal geometric estimates for ${{\rm cap}_{1<p<\infty}(K,O)}$ (cf. Theorems \ref{t21}-\ref{t31} established within \S\ref{s2}-\S\ref{s3}) but also their naturally induced geometric \&/or physic objects (cf. Theorems \ref{t41}-\ref{t51} established within \S\ref{s4}-\S\ref{s5}). Of particular importance is the following formula as showed in \cite{grigor1999isoperimetric}:
\begin{equation}
\label{{1.5}}
{\rm cap}_{1<p<\infty}\big(\overline{B}(r),B(R)\big)=\left(\int_r^{R} (\sigma_n\sinh^n s)^{\frac{1}{1-p}}ds\right)^{1-p}\ \text{with}\ \overline{B}(r)=\text{the closure of}\ B(r),
\end{equation}
whose limiting case $p\to 1$ recovers \eqref{1.1}'s second formula due to \eqref{{1.4}}'s second formula. And yet, along with \eqref{{1.4}}'s first formula, artifically letting $p\to 0$ in \eqref{{1.5}} suggests the following formal defination
$$
{\rm cap}_{0}\big(\{0\},B(R)\big)\equiv\upsilon_g\big(B(R)\big)=\int_0^{R} (\sigma_n\sinh^n s)\, ds.
$$

Throughout we will write $X\sim Y$ when either $X\, \&\, Y$ or $X^{-1}\, \&\, Y^{-1}$ are equivalent infinitesimals.

\section{Sharp capacitary inequalities for smooth condensers}\label{s2}

If $K$ is a bounded closed domain with smooth boundary $\partial K$, then it is well-known that for $p\in (1,\infty)$ there exists a unique weak solution $u$ to the Dirichlet problem:
\begin{equation*}
\begin{cases}
 \text{div}\big(|\nabla u|^{p-2}\nabla u\big)=0\ \ &{\rm in}\  \ O\setminus K;\\
u=1 \ \  &{\rm on}\  \ \partial K;\\
u=0 \ \ &{\rm on}\ \ \partial O,
\end{cases}
\end{equation*}
such that
\begin{equation}\label{{2.1}}
{\rm cap}_{p}(K,O)=\int_{O\setminus K}|\nabla u|^{p} d\upsilon_g=\int_{\{u=t\}}|\nabla u|^{p-1}\, d\sigma_g
\end{equation}
holds for almost all $t\in[0,1].$ Accordingly, the function $u$ is said to be the hyperbolic $p$-capacitary potential of the condenser $(K,O)$.

The first goal of this paper is to establish a sharp isocapacitary inequality in $\mathbb H^{n+1}$. In order to provide a more accurate explanation of it, we define the hyperbolic volume radius ${\rm Rvol}(S)$ of a bounded set $S$ via solving the equation 
$$
\upsilon_g(S)=\upsilon_g\Big(B\big({\rm Rvol}(S)\big)\Big),
$$
thereby achieving a hyperbolic counterpart of \cite[(2.14)]{xiao2020geometrical} (based on \cite[Proposition 1.2]{hansen1994isoperimetric}).

\begin{theorem}\label{t21} Suppose that
	$$
	\begin{cases}
  0<t<1<p<\infty;\\
   (K,O)\ \text{is a condenser in $\mathbb{H}^{n+1}$ with the smooth boundary pair $\{\partial K,\partial O\}$};\\
 u\ \text{is the hyperbolic $p$-capacitary potential of $(K,O)$};\\
 D_t=\{x\in O\setminus K:\ u(x)\ge t\}\cup K;\\
 r_t={\rm Rvol}(D_t);\\
    r= {\rm Rvol}(K);\\
    	R={\rm Rvol}(O),
    \end{cases}
    $$
    Then

  \begin{itemize}
  	\item[\rm (i)]
  \begin{equation} \label{{2.2}}
 \exists\ t_0\in (0,1)\ \text{such that}
 \ \left(\frac{{\rm cap}_{p}(K,O)}{{\rm cap}_{p}\big(\overline{B}(r),B(R)\big)}\right)^{\frac{1}{p}}\geq \frac{{\sigma_g}\big(\partial D_{t_0}\big)}{{\sigma_g}\big(\partial B(r_{t_0})\big)}\geq 1.
 \end{equation}
 
 \item[\rm (ii)] 
 \begin{align}\label{{2.3}}
&K\, \&\, O\, \text{are hyperbolic geodesic balls similar to}\ \overline{B}(r)\, \&\, B(R)\\
&\ \ \Rightarrow\ {\rm cap}_{p}(K,O)={\rm cap}_{p}\big(\overline{B}(r), B(R)\big)\notag\\
&\ \ \Rightarrow\ K\, \&\, O\ \text{are hyperbolic geodesic balls}.\notag
 \end{align}
\end{itemize}
\end{theorem}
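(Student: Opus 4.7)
The plan is to couple the coarea formula with H\"older's inequality and the hyperbolic isoperimetric inequality, producing a pointwise ODE inequality along the level sets of the $p$-capacitary potential $u$, and then to transfer it into an integral inequality whose right-hand side reproduces the model capacity \eqref{{1.5}}. The mean value theorem for integrals will then supply the intermediate level $t_0$, and equality in each step will be tracked to deliver the rigidity statement (ii).

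For the main estimate, write $\phi(t)=\upsilon_g(D_t)$. The coarea formula and \eqref{{2.1}} give $-\phi'(t)=\int_{\{u=t\}}|\nabla u|^{-1}\,d\sigma_g$ for a.e.\ $t$, and H\"older's inequality with exponents $p$ and $p/(p-1)$ applied to $\sigma_g(\partial D_t)=\int_{\{u=t\}}1\,d\sigma_g$ yields
\[
\sigma_g(\partial D_t)^p\le\Bigl(\int_{\{u=t\}}|\nabla u|^{p-1}\,d\sigma_g\Bigr)\Bigl(\int_{\{u=t\}}|\nabla u|^{-1}\,d\sigma_g\Bigr)^{p-1}={\rm cap}_p(K,O)(-\phi'(t))^{p-1}.
\]
Setting $\alpha(t)=\sigma_g(\partial D_t)/\sigma_g(\partial B(r_t))$, the hyperbolic isoperimetric inequality gives $\alpha(t)\ge 1$. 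Since $\phi(t)=\upsilon_g(B(r_t))$ implies $-\phi'(t)=-\sigma_g(\partial B(r_t))r_t'(t)$, the previous display rearranges to
\[
-r_t'(t)\ge\frac{\alpha(t)^{p/(p-1)}\,\sigma_g(\partial B(r_t))^{1/(p-1)}}{{\rm cap}_p(K,O)^{1/(p-1)}}.
\]

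Integrating this over $t\in(0,1)$ gives
\[
{\rm cap}_p(K,O)^{1/(p-1)}\int_0^1\frac{-r_t'(t)\,dt}{\alpha(t)^{p/(p-1)}\,\sigma_g(\partial B(r_t))^{1/(p-1)}}\ge 1.
\]
Applying the first mean value theorem for integrals to the continuous factor $\alpha(t)^{-p/(p-1)}$ against the positive measure $-r_t'(t)\,\sigma_g(\partial B(r_t))^{-1/(p-1)}\,dt$ produces $t_0\in(0,1)$ at which $\alpha(t_0)^{-p/(p-1)}$ can be pulled out. The remaining integral, after the change of variables $s=r_t$, coincides by \eqref{{1.5}} with ${\rm cap}_p(\overline{B}(r),B(R))^{-1/(p-1)}$. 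Rearranging and raising to the $(p-1)/p$-power yields the first inequality in \eqref{{2.2}}; the second is the hyperbolic isoperimetric inequality applied at level $t_0$. This proves (i).

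For (ii), the forward implication is routine: if $K$ and $O$ are concentric geodesic balls of radii $r$ and $R$, then by radial symmetry the $p$-capacitary potential is radial, every $\partial D_t$ is a geodesic sphere (so $\alpha\equiv 1$), $|\nabla u|$ is constant on each level set (so H\"older is an equality), and \eqref{{1.5}} directly gives ${\rm cap}_p(K,O)={\rm cap}_p(\overline{B}(r),B(R))$. Conversely, equality of the two capacities forces equality throughout the chain above; in particular $\alpha(t)=1$ for a.e.\ $t$, so by the rigidity of the hyperbolic isoperimetric inequality every $D_t$ is a hyperbolic geodesic ball. Taking $t\to 1^-$ and $t\to 0^+$ identifies $K$ and $O$ as geodesic balls of radii $r$ and $R$. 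The chief technical difficulty is supplying enough regularity (strict monotonicity of $r_t$ and continuity of $\alpha$ in $t$) to legitimize the change of variables and the mean value theorem step; this is handled via the standard $C^{1,\gamma}$ regularity of $p$-harmonic functions together with the fact that the critical set of $u$ has zero $\upsilon_g$-measure, ensuring that $\{u=t\}$ is a smooth hypersurface for a.e.\ $t$.
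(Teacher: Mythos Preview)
Your argument is correct and takes a genuinely different route from the paper. For (i), the paper introduces the explicit model potential $v$ on the symmetric annulus $(\overline{B}(r),B(R))$ with level sets $E_t$, and applies a derivative mean-value argument to the function $t\mapsto \upsilon_g(D_t)-\upsilon_g(E_t)$ (which vanishes at $t=0$ and is $\ge 0$ at $t=1$) to locate a single $t_0$ where both the volumes match and the derivative of the difference is nonnegative; the inequality then falls out at that point. You instead integrate the rearranged H\"older--coarea inequality over all of $(0,1)$ and extract $t_0$ by an integral mean-value theorem. Your version is more in the spirit of P\'olya--Szeg\H{o} symmetrization and avoids introducing $v$, at the cost of the regularity issue you flag. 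For (ii), the paper quotes Maz'ya's isocapacitary inequality \cite[(2.2.8)]{maz2013sobolev} and its equality case, whereas you track equality through your own chain; your route is more self-contained.

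One point to tighten: the first mean-value theorem for integrals requires continuity of $\alpha(t)^{-p/(p-1)}$, and continuity of $t\mapsto\sigma_g(\partial D_t)$ is \emph{not} a consequence of $C^{1,\gamma}$ regularity of $u$ alone (perimeter can jump at critical levels). You can bypass this entirely: any nonnegative measurable function is $\ge$ its $\mu$-average on a set of positive $\mu$-measure, and intersecting with the full-measure set of regular values already yields an admissible $t_0$. Similarly, for the rigidity in (ii) your phrase ``equality throughout the chain above'' is a bit loose, since in your chain the isoperimetric inequality was only applied at the single extracted $t_0$; the clean way is to first combine H\"older with the isoperimetric bound $\sigma_g(\partial D_t)\ge\sigma_g(\partial B(r_t))$ pointwise to get $-r_t'\,\sigma_g(\partial B(r_t))^{-1/(p-1)}\ge {\rm cap}_p(K,O)^{-1/(p-1)}$, integrate to obtain the basic isocapacitary inequality, and then observe that equality of the capacities forces equality a.e.\ in both steps, hence $\alpha(t)=1$ a.e.
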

\begin{proof} (i)
	For any $t\in[0,1]$ let 
	$$\partial D_t=\{x\in O:u(x)=t\}$$ be the boundary of $D_t.$ Then \eqref{{2.1}} derives
	\begin{equation*}
	{\rm cap}_p(K,O)=\int_{\partial D_t}|\nabla u|^{p-1}d\sigma_g\ \ \text{for almost every}\ \ t\in [0,1].
	\end{equation*}
	Note that the co-area formula produces
	\begin{equation*}
	\upsilon_g(D_t)=\upsilon_g(K)+\int_t^1\int_{\partial D_s}|\nabla u|^{-1}d\sigma_g ds
	\end{equation*}
	So, the H\"older inequality further implies
	\begin{align*}
	\upsilon_g(\partial D_t)&\leq\left(\int_{\partial D_t} |\nabla u|^{p-1} d\sigma_g \right)^{\frac{1}{p}}\left(\int_{\partial D_t} |\nabla u|^{-1} d\sigma_g \right)^{1-\frac{1}{p}}\\
	&=\big({\rm cap}_p(K,O)\big)^{\frac{1}{p}}\left(-\frac{d}{dt}\upsilon_g(D_t)\right)^{1-\frac{1}{p}}.
	\end{align*}
	
Also, for $s\in (0,\infty)$ we may assume
	\begin{equation*}
\begin{cases}	
f(s)=\sigma_n\sinh^n s=\sigma_g\big(\partial B(s)\big);\\
F(s)=\int_0^s\sigma_n\sinh^n t\, dt=\upsilon_g(B(s)\big),
\end{cases}
	\end{equation*}
thereby considering the hyperbolic $p$-capacitary potential $v(x)$ of concentric ring $\big(\overline{B}(r),B(R)\big)$ centred at the origin:
	\begin{equation*}
	v(x)=\frac{\int_{d_g(0,x)}^R f(s)^{\frac{1}{1-p}}ds}{\int_{r}^R f(s)^{\frac{1}{1-p}}ds}
	\end{equation*}

If	
$$
\begin{cases} 
E_t=\{x\in B(R)\setminus \overline{B}(r):v(x)\geq t\}\cup \overline{B}(r);\\
\partial E_t=\{x\in B(R):v(x)=t\},
\end{cases}
$$
then
$$
\begin{cases} 
\upsilon_g(D_0)=\upsilon_g\big(B(R)\big)=\upsilon(E_0);\\
\upsilon_g(D_1)\geq \upsilon_g(K)=\upsilon_g\big(B(r)\big)=\upsilon_g(E_1),
\end{cases}
$$
and hence by the mean value theorem for derivative there exists a $t_0\in (0,1]$ such that
	\begin{equation*}
	\begin{cases}
	\upsilon_g(D_{t_0})-\upsilon_g(E_{t_0})=0;\\
	\frac{d}{dt}\big(\upsilon_g(D_t)-\upsilon_g(E_t)\big)\Big|_{t=t_0}\geq 0.
	\end{cases}
	\end{equation*}
	Since $|\nabla  v|$ is constant on $\partial E_t,$ we derive 
	\begin{equation*}
	\sigma_g(\partial E_t)=\Big({\rm cap}_p\big(\overline{B}(r),B(R)\big)\Big)^{\frac{1}{p}}\left(-\frac{d}{dt}\upsilon_g(E_t)\right)^{1-\frac{1}{p}}.
	\end{equation*}
	whence
	\begin{equation*}
	\left(\frac{{\rm cap}_{p}(K,O)}{{\rm cap}_{p}\big(\overline{B}(r),B(R)\big)}\right)^{\frac{1}{p}}\geq \frac{\sigma_g(\partial D_{t_0})}{\sigma_g(\partial E_{t_0})}.
	\end{equation*}
	Notice that $\partial E_{t_0}$ is a hyperbolic geodesic sphere with radius $${\rm Rvol} (E_{t_0})={\rm Rvol} (D_{t_0}).
	$$
	So we finish the proof of the first inequality of \eqref{{2.2}}.
	
	The second inequality of \eqref{{2.2}} follows immediately from the isoperimetric inequality in $\mathbb H^{n+1}$ that (cf. \cite[Section 6]{schmidt1943beweis} or \cite[(2.8)]{BDS})
	\begin{equation}\label{{2.4}}
	\sigma_g(\partial D_{t_0})\geq f\circ F^{-1}\big(\upsilon_g(D_{t_0})\big)=  f\circ F^{-1}\big(\upsilon_g(E_{t_0})\big)=\sigma_g(\partial E_{t_0}).
	\end{equation}
	
	(ii) It is enough to verify \eqref{{2.3}}'s second implication  `$\Rightarrow$' . Now, suppose
	\begin{equation}
	\label{{2.5}}
	{\rm cap}_{p}(K,O)={\rm cap}_{p}\big(\overline{B}(r),B(R)\big).
	\end{equation} 
	Then $D_{t_0}$ is a hyperbolic geodesic ball. Next, we will show that $K$ \& $O$ are two hyperbolic geodesic balls.
	
	To do so, recall Maz’ya's isocapacitary inequality in \cite[(2.2.8)]{maz2013sobolev}:
	\begin{equation}\label{{2.6}}
	{\rm cap}_p(K,O)\geq \left(\int_{\upsilon_g(K)}^{\upsilon_g(O)}\big(I(s)\big)^{\frac{p}{1-p}}\,ds\right)^{1-p},
	\end{equation}
	where $I(s)$ is not only an isoperimetric function but also equal to the infimum of $\sigma_g(\partial\Omega)$ over all precompact open sets $\Omega$ with $$
	\begin{cases}
	K\subset\Omega\subseteq O;\\
	\upsilon_g(\Omega)\geq s.
	\end{cases}
	$$ 
	So, if the equality of \eqref{{2.6}} holds, then
	$$I\big(\upsilon_g(D_t)\big)=\sigma_g(\partial D_t)\ \ \text{for almost every}\ \ t\in(0,1).
	$$
	Notice that within the hyperbolic space $\mathbb H^{n+1}$ there is
	$$I(s)=f\circ F^{-1}(s)\ \ \forall\ \ s\in (0,\infty).
	$$ 
	Thus
	\begin{align}\label{{2.7}}
	{\rm cap}_p(K,O)&\geq \left(\int_{\upsilon_g(K)}^{\upsilon(O)}\big(f\circ F^{-1}(s)\big)^{\frac{p}{1-p}}\,ds\right)^{1-p}
	\\ &=\left(\int_{F^{-1}(\upsilon_g(K))}^{F^{-1}(\upsilon_g(O)}\big(f(\theta)\big)^{\frac{p}{1-p}}F'(\theta)\,d\theta\right)^{1-p}\notag
	\\ &=\left(\int_{r}^{R}\big(f(\theta)\big)^{\frac{1}{1-p}}\,d\theta\right)^{1-p}\notag
	\\ &={\rm cap}_{p}\big(\overline{B}(r),B(R)\big).\notag
	\end{align}
	Accordingly, a combination of \eqref{{2.6}} \& \eqref{{2.5}} implies \eqref{{2.6}}'s equality case which in turn derives that $D_t$ is a hyperbolic geodesic ball for almost every $t\in(0,1).$ Therefore,
	$O=D_0$ \& $K=D_1$ are hyperbolic geodesic balls respectively.
	
\end{proof}

\section{Sharp capacitary inequalities for punctured condensers}\label{s3}

If $O\supset K\downarrow \{x\}$, then for $p>1$ there exists a function $u_x(\cdot)$ satisfying 
$$
\begin{cases}
\text{div}\big(|\nabla u_x|^{p-2}\nabla u_x\big)= 0 &{\rm in}\ \ O\setminus\{x\};\\
u_x(x)=1;\\
u_x=0 &{\rm on}\ \ \partial O,
\end{cases}
$$
and hence $u_x(\cdot)$ is called the hyperbolic $p$-capacitary potential of the puctured condenser $(\{x\},O)$. Consequently, we define the hyperbolic $p$-Green function $G_p(x,\cdot)$ of $O$ with singularity at $x$ by solving 
$$
\begin{cases} - \text{div}\big(|\nabla G_p(x,\cdot)|^{p-2}\nabla  G_p(x,\cdot)\big) =\delta(x,\cdot)=\text{the Dirac mass at}\ x\ {\rm in}\ \ O;\\
G_p(x,\cdot)\big|_{\partial O}=0.
\end{cases}
$$

The second goal of this paper is to establish the following sharp capacitary inequalities for $(\{x\},O)$.

\begin{theorem}\label{t31} Let
	\begin{equation*}
	\begin{cases}
	1<p<\infty;\\
	x\in O=\text{a bounded subdomain of}\ \mathbb{H}^{n+1}\ \text{with the smooth boundary}\ \partial O;\\
	\text{$u_{x}$=\ the hyperbolic $p$-capacitary potential of $(\{x\},O)$};\\
	s_p(x,O)=\lim\limits_{r\rightarrow 0}{\rm cap}_p\big(\overline{B}_x(r),O\big);\\
	\kappa_p(r)=\int_r^{\infty}(\sigma_n\sinh ^ns)^{\frac{1}{1-p}}\, ds;\\
	(\kappa_p)^{-1}=\text{the inverse of}\ \kappa_p;\\
	 \tau_p(x,O)=\lim\limits_{r\rightarrow 0}\Big(\kappa_p(r)-\big({\rm cap}_p\big(\overline{B}_x(r),O\big)\big)^{\frac{1}{1-p}}\Big);\\
	\rho_p(x,O)=(\kappa_p)^{-1}\circ\tau_p(x,O)=\text{the hyperbolic $p$-harmonic radius of $O$ at $x$}.
	\end{cases}
	\end{equation*}
	
	\begin{itemize}
		\item[\rm (i)] If 
		$$
		\begin{cases} 
		p>n+1;\\
		U=\big\{y\in O:\ u_{x}(y)>a\in (0,1)\big\};\\
		R={{\rm Rvol}(O)};\\
		r={{\rm Rvol}(U)},
		\end{cases}
		$$
		then 
		\begin{equation}\label{{3.1}}
		\frac{s_p(x,O)}{s_p(x,B_x(R))}-1\geq(p-1)\left(\frac{s_p\big(x,B_x(R)\big)}{s_p(x,B_{x}(r))}\right)^{\frac{1}{p-1}}
		\left(1-\Bigg(\frac{s_p\big(x,B_x(r)\big)}{s_p(x,U)}\Bigg)^{\frac{1}{p-1}}\right),
		\end{equation}
		holds with equality when $O=B_{x}(R)\  \&\  U=B_{x}(r)$.
		
		\item[\rm (ii)] If 
		$$
		\begin{cases} 
		p\le n+1;\\
		O=\text{a convex subdomain of}\ \mathbb H^{n+1};\\
		{\rm d}={\rm diam}(O)=\text{the diameter of}\ O,
		\end{cases}
		$$ 
		then
		\begin{equation}
		\label{{3.2}}
		\text{$\exists\ x_0\in O$\ such that\
			$\rho_p(x_0,O)\leq {\rm arccosh}\Bigg(\frac{\cosh \rm d}{\cosh\frac{\rm d}{2}}\Bigg)<\frac{\rm d}{2}+\ln 2$}
		\end{equation}
		holds with equality when $O$ is a hyperbolic $x_0$-centered geodesic ball.
	\end{itemize}
	\end{theorem}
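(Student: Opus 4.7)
For part (i), the plan is to decompose the $p$-capacitary potential $u_x$ at an intermediate level $a\in(0,1)$. On $\overline{U}\setminus\{x\}$ the function $(u_x-a)/(1-a)$ is the $p$-capacitary potential of $(\{x\},U)$, while on $O\setminus\overline{U}$ the function $u_x/a$ is that of $(\overline{U},O)$. Combining this with \eqref{{2.1}} and the co-area formula yields the Gr\"otzsch-type series identity
\begin{equation*}
s_p(x,O)^{\frac{1}{1-p}}=s_p(x,U)^{\frac{1}{1-p}}+{\rm cap}_p(\overline{U},O)^{\frac{1}{1-p}},
\end{equation*}
and the analogous identity for the reference pair $(\{x\},B_x(R))$ split by $B_x(r)$, which can be computed from \eqref{{1.5}} and is finite precisely because $p>n+1$. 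Theorem \ref{t21} applied to $(\overline{U},O)$ gives ${\rm cap}_p(\overline{U},O)\ge{\rm cap}_p(\overline{B}_x(r),B_x(R))$, and applied to $(\overline{B}_x(\epsilon),U)$ followed by $\epsilon\to 0$ gives $s_p(x,U)\ge s_p(x,B_x(r))$.

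Substituting both monotonicities into the series identity and writing $q=1/(p-1)$, one arrives at the purely algebraic inequality $1-(s_p(x,B_x(R))/s_p(x,O))^q\ge X$, where
\begin{equation*}
X=\big(s_p(x,B_x(R))/s_p(x,B_x(r))\big)^q\Big(1-\big(s_p(x,B_x(r))/s_p(x,U)\big)^q\Big)\in[0,1).
\end{equation*}
Inverting gives $s_p(x,O)/s_p(x,B_x(R))\ge(1-X)^{-(p-1)}$, and Bernoulli's inequality $(1-X)^{-(p-1)}\ge 1+(p-1)X$ produces \eqref{{3.1}}. For $O=B_x(R)$ the potential $u_x$ is radial, forcing $U=B_x(r)$, $X=0$, and every intermediate step to be an equality, which handles the equality case.

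Part (ii) is geometric. Pick $x,y\in\overline{O}$ with $d_g(x,y)={\rm d}$ and let $x_0$ be the midpoint of the minimizing geodesic $[x,y]$, which lies in $O$ by convexity. For each $z\in O$ drop a perpendicular from $z$ to $[x,y]$ meeting it at $z'$; set $a=d_g(z,z')$ and $e=d_g(z',x_0)$, so the farther endpoint of $[x,y]$ is at distance ${\rm d}/2+e$ from $z'$. The hyperbolic Pythagorean theorem combined with the diameter constraint gives $\cosh a\cdot\cosh({\rm d}/2+e)\le\cosh{\rm d}$, so
\begin{equation*}
\cosh d_g(z,x_0)=\cosh a\cdot\cosh e\le\frac{\cosh{\rm d}\cdot\cosh e}{\cosh({\rm d}/2+e)}\le\frac{\cosh{\rm d}}{\cosh({\rm d}/2)},
\end{equation*}
where the last step uses $\cosh({\rm d}/2+e)\ge\cosh({\rm d}/2)\cosh e$ from the addition formula. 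Hence $O\subseteq B_{x_0}(r_0)$ with $r_0={\rm arccosh}(\cosh{\rm d}/\cosh({\rm d}/2))$.

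The harmonic radius $\rho_p$ is monotone non-decreasing under domain inclusion: enlarging $O$ decreases ${\rm cap}_p(\overline{B}_x(r),O)$ by zero-extension of test functions, hence decreases $\tau_p$, while $\kappa_p^{-1}$ is decreasing. Combined with $\rho_p(x_0,B_{x_0}(r_0))=r_0$, obtained by computing $\tau_p(x_0,B_{x_0}(r_0))=\kappa_p(r_0)$ directly from \eqref{{1.5}}, this yields $\rho_p(x_0,O)\le r_0$. The strict upper bound $r_0<{\rm d}/2+\ln 2$ reduces, with $u=e^{{\rm d}/2}$, to $(u^2+u^{-2})/(u+u^{-1})<u+1/(4u)$, which is elementary. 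The main obstacle of the proof is the justification of the Gr\"otzsch decomposition in part (i), which requires $p$-harmonic regularity of $u_x$ and smoothness of the level sets $\partial U$ for almost every $a$ so that Theorem \ref{t21} is applicable.
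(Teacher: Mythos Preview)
Your proposal is correct and close in spirit to the paper, with two streamlinings worth recording. In part (i) the paper decomposes the potential of $(\overline{B}_x(\epsilon),O)$ at level $a$, derives only the \emph{inequality} $C\ge\big(C_1^{1/(1-p)}+C_2^{1/(1-p)}\big)^{1-p}$ by minimizing $(1-a)^pC_1+a^pC_2$ over all $a$, compares to the concentric-ball reference via the mean value theorem for $t\mapsto t^{1-p}$, and then sends $\epsilon\to 0$; you instead work directly with $u_x$ and observe that the Gr\"otzsch relation is actually an \emph{equality} (the level $a$ defining $U$ coincides with the minimizer), which lets you subtract the two series identities and finish with Bernoulli rather than the mean value theorem --- same ingredients, slightly cleaner bookkeeping. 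In part (ii) the paper obtains the covering bound $\cosh d_g(x_0,w)\le\cosh{\rm d}/\cosh({\rm d}/2)$ via two applications of the hyperbolic law of cosines at the shared angle at an endpoint of the diameter, whereas you drop a perpendicular to the diameter line and use the hyperbolic Pythagorean theorem together with the addition formula $\cosh({\rm d}/2+e)\ge\cosh({\rm d}/2)\cosh e$; both computations are equally short, and your monotonicity argument for $\rho_p$ and the elementary verification of $r({\rm d})<{\rm d}/2+\ln 2$ match the paper's.
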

\begin{proof}
	(i) Firstly, note that \eqref{{1.5}} yields
	\begin{equation}
	\label{{3.3}}
	\lim\limits_{r\rightarrow 0}{\rm cap}_{p}(\overline{B}_x(r),B_x(R))=0\Leftrightarrow p\in (1,n+1].
	\end{equation}
	So, as an application of
	regional monotonicity of ${\rm cap}_{p},$ we derive that if $p\in (1,n+1]$ then
	$$
	\lim\limits_{r\rightarrow 0}{\rm cap}_{p}\big(\overline{B}_x(r),O\big)=0
	$$
	for any bounded open set $O$ and $x\in O$, thereby finding that if $p>n+1.$ then for any $x\in O\subseteq\mathbb{H}^{n+1}$ and small $r>0,$ we have that
	\begin{equation}\label{{3.4}}
	{\rm cap}_p\big(\overline{B}_x(r),O\big)\geq\left(\int_0^{\infty}(\sigma_n\sinh^ns)^{\frac{1}{1-p}}ds\right)^{1-p}=\text{a constant depending only on}\ p
	\end{equation}
	and is an increasing function of $r$, and hence there is the formula
		$$s_p(x,O)=\lim\limits_{r\rightarrow 0}{\rm cap}_p\big(\overline{B}_x(r),O\big)\ \ \forall\ \  x\in O.
		$$
	
Secondly, suppose that $u_r$ is the hyperbolic $p$-capacitary potential of $\big(\overline{B}_x(r),O\big)$ for small $r>0.$ By the comparison principle, we have  $$u_{r_1}(y)\leq u_{r_2}(y)\ \ \forall\ \ r_1<r_2\ \ \&\ \ y\in O\setminus \overline{B}_x(r_2),
	$$	
	thereby reaching the limit
	$$
	\lim\limits_{r\rightarrow 0}u_r(y)=u_x(y)\ \ \forall\ \ y\in O\setminus\{x\}.
	$$
	Although not needed, it can be proved that the above convergence is in $C^{1,\alpha}$ norm on every compact set of $O\setminus\{x\}.$ However, what we want is \cite[Proposition 4.2]{janfalk1996behaviour}'s hyperbolic extension - if $x\in O\subseteq \mathbb{H}^{n+1}$ then
		\begin{equation}
		\label{{3.5}}
		s_p(x,O)=G_p(x,x)^{1-p}\ \ \text{under}\ \ p>n+1.
		\end{equation}
	
	In order to check the implication, using the formula 
		$$u_x(y)=\frac{G_p(x,y)}{G_p(x,x)}\ \ \forall\ \ y\in O,
		$$
		we obtain
		\begin{equation*}
		\begin{aligned}
		s_p(x,O)&=\int_\Omega|\nabla  u_x(y)|^p d\upsilon_g(y)
		\\ &=G_p(x,x)^{-p}\int_\Omega|\nabla  G_p(x,y)|^p d\upsilon_g(y)
		\\ &=-G_p(x,x)^{-p}\int_\Omega G_p(x,y)\text{div}\big(|\nabla  G_p(x,y)|^{p-2}\nabla  G_p(x,y)\big)\,d\upsilon_g(y)
		\\ &=G_p(x,x)^{1-p},
		\end{aligned}
		\end{equation*}
		as desired in \eqref{{3.5}}

	Thirdly, we will extend  \cite[Lemma 1.3]{hansen1994isoperimetric} to the assertion that if
	 $(K,O)$ is a condenser in $\mathbb{H}^{n+1}$ and $u$ is its hyperbolic $1<p$-capacitary potential as well as 
	 $$
	 U=\big\{x\in O\setminus K:\ u(x)>a\big\}\ \ \forall\ \ a\in (0,1)
	 $$
	 then  
		\begin{equation}\label{{3.6}}
		{\rm cap}_{p}(K,O)\geq {\rm cap}_{p}\big(\overline{B}(r_K),B(r_O)\big)+\frac{(p-1){\rm cap}_{p}\big(\overline{B}(r_K),B(r_O)\big)^{\frac{p}{p-1}}}{\left({\rm cap}_{p}(\overline{B}(r_K),B(r_U))^{\frac{1}{1-p}}-{\rm cap}_{p}(K,U)^{\frac{1}{1-p}}\right)^{-1}}.
\end{equation}
		where $r_K,r_U,r_O$ are the hyperbolic volume radii of $K,U, O$ respectively.

In order to verify \eqref{{3.6}}, let 
$$
u_1(\cdot)=(1-a)^{-1}{\max\{u(\cdot),0\}}\ \ \text{and}\ \ 
u_2(\cdot)=a^{-1}{\min\{u(\cdot),a\}}
$$ 
be the hyperbolic $p$-capacitary potentials of
		$(K,U)$ and $(\overline{U},O)$ respectively. Then
		$$u=(1-a)u_1+au_2.$$
		Upon setting
		$$
		\begin{cases} C= {\rm cap}_{p}(K,O)\geq c={\rm cap}_{p}\big(\overline{B}(r_K),B(r_O)\big);\\
		C_1={\rm cap}_{p}(K,U)\geq  c_1={\rm cap}_{p}\big(\overline{B}(r_K),B(r_U)\big);\\
		C_2={\rm cap}_{p}(\overline{U},O)\geq c_2={\rm cap}_{p}\big(\overline{B}(r_U),B(r_O)\big),
		\end{cases}
		$$
		we find not only
		$$c=(c_1^{\frac{1}{1-p}}+c_2^{\frac{1}{1-p}})^{1-p}$$ 
		but also
		\begin{equation*}
		\begin{aligned}
		C&=\int_{O\setminus K}|\nabla u|^p\,d\upsilon_g\\
		&=(1-a)^p \int_{U\setminus K}|\nabla  u_1|^p\,d\upsilon_g+a^p\int_{O\setminus U}|\nabla  u_2|^p\,d\upsilon_g
		\\ &=(1-a)^p C_1+a^p C_2
		\\ &\geq \left(1-\frac{C_1^{\frac{1}{p-1}}}{C_1^{\frac{1}{p-1}}+C_2^{\frac{1}{p-1}}}\right)^p C_1+\left(\frac{C_1^{\frac{1}{p-1}}}{C_1^{\frac{1}{p-1}}+C_2^{\frac{1}{p-1}}}\right)^p C_2
		\\&=(C_1^{\frac{1}{1-p}}+C_2^{\frac{1}{1-p}})^{1-p}
		\end{aligned}
		\end{equation*}
		As a consequence of the mean value theorem, there is $\xi\in (0,1)$ such that
		\begin{equation*}
		\begin{aligned}
		C-c &\geq\big(C_1^{\frac{1}{1-p}}+C_2^{\frac{1}{1-p}}\big)^{1-p}-\big(c_1^{\frac{1}{1-p}}+c_2^{\frac{1}{1-p}}\big)^{1-p}
		\\&=(1-p)\xi^{-p}\Big(C_1^{\frac{1}{1-p}}+C_2^{\frac{1}{1-p}}-c_1^{\frac{1}{1-p}}-c_2^{\frac{1}{1-p}}\Big)
		\\&\geq (p-1) \big(c_1^{\frac{1}{1-p}}+c_2^{\frac{1}{1-p}}\big)^{-p}\Big(\big(c_1^{\frac{1}{1-p}}-C_1^{\frac{1}{1-p}}\big)+\big(c_2^{\frac{1}{1-p}}-C_2^{\frac{1}{1-p}}\big)\Big)
		\\&\geq(p-1) c^{\frac{p}{p-1}}\Big(c_1^{\frac{1}{1-p}}-C_1^{\frac{1}{1-p}}\Big)
		\end{aligned}
		\end{equation*}
	
	Finally, choosing $K=\overline{B}_x(r)$ and letting $r\to 0$ in \eqref{{3.6}} we get \eqref{{3.1}} whose equality case can be validated easily.
	
	(ii) Firstly, just like \cite{bandle1996harmonic, cardaliaguet2002strict, wang2007n, xiao2020geometrical} \eqref{{3.4}} reveals that $\tau_p(x,O)$ \& $\rho_p(x,O)$ are less significant whenever $p>n+1$. Thus, we can readily find that if $r\to 0$ then
	$$
	\kappa_p(r)\sim\begin{cases} \sigma_n^{\frac{1}{1-p}}\left(\frac{p-1}{n+1-p}\right) r^{\frac{n+1-p}{1-p}}\ \ &\text{under}\ \ p\in (1,n+1);\\
	\sigma_n^{\frac{1}{1-p}}\ln r\ \ &\text{under}\ \ p=n+1.
	\end{cases}
	$$
	
	Secondly, we prove that if $O$ is a bounded domain in $\mathbb{H}^{n+1}$ and $B_x(r)$ is the hyperbolic geodesic ball centered at $x\in O$ with small radius $r>0$ then the function
		$$\kappa_p(r)-\left({\rm cap}_p\big(\overline{B}_x(r),O\big)\right)^{\frac{1}{1-p}}$$
		is not only positive but also increasing in $r$. 
	
	Indeed, the positivity is obvious thanks to
	$$
	\eqref{{1.5}}\Rightarrow\kappa_p(r)=\lim_{R\to\infty}\left({\rm cap}_p\big(\overline{B}_x(r), B_x(R)\big)\right)^{\frac{1}{1-p}}.
	$$ 
	Now, for 
	$$r_2>r_1>0\ \ \&\ \ B_x(r_2)\subseteq O,
	$$ 
	the subadditivity of the conformal modulus in \cite[Lemma 2.1]{flucher1999variational} implies not only
		$$
		\left({\rm cap}_p\big(\overline{B}_x(r_1),O\big)\right)^{\frac{1}{1-p}}\geq \left({\rm cap}_p\big(\overline{B}_x(r_1),B_x(r_2)\big)\right)^{\frac{1}{1-p}}+\left({\rm cap}_p\big(\overline{B}_x(r_2),O\big)\right)^{\frac{1}{1-p}},
		$$
		but also the desired conclusion due to
		$$\left({\rm cap}_p\big(\overline{B}_x(r_1),B_x(r_2)\big)\right)^{\frac{1}{1-p}}=\kappa_p(r_1)-\kappa_p(r_2).
		$$
	
	Thirdly, in accordance with \cite[p. 8]{flucher1999variational},
		for 
		$$
		\begin{cases} 
		p\in (1,n+1];\\
		x\in \mathbb{H}^{n+1};\\
		r>0,
		\end{cases}
		$$ 
		there is
		$$
		\begin{cases} \tau_p(x,B_x(r))=\kappa_p(r);\\
		\rho_p\big(x,B_x(r)\big)=r.
		\end{cases} 
		$$
Now, suppose that not only $O\subseteq\mathbb H^{n+1}$ is a bounded convex domain with its diameter 
$${\rm d}=d_g(y,z)\ \ \text{for some}\ \ y,z\in\partial O
$$ but also $x_0\in O$ (due to $O$'s convexity) is the center of the hyperbolic geodesic line $\overline{yz}.$ Moreover, for any $w\in O$ let 
		$$
		\begin{cases} 
		a=d_g(z,w);\\
		b=d_g(y,w);\\
		r=d_g(x_0,w).
		\end{cases}
		$$
		Then 
		$$\max\{a,b\}\leq {\rm d}.
		$$ 
		According to the law of cosine, there holds
		$$
		\cos\angle zyw=\frac{\cosh b\cosh {\rm d}-\cosh a}{\sinh b\sinh {\rm d}}=\frac{\cosh b\cosh \frac{{\rm d}}{2}-\cosh r}{\sinh b\sinh \frac{{\rm d}}{2}}.
		$$
		Furthermore, we gain
		\begin{equation*}
		\begin{aligned}
		\cosh r&=\cosh b\cosh \frac{{\rm d}}{2}-\left(\frac{\sinh\frac{{\rm d}}{2}}{\sinh {\rm d}}\right)(\cosh b\cosh {\rm d}-\cosh a)
		\\ &=\left(\frac{\sinh\frac{{\rm d}}{2}}{\sinh {\rm d}}\right)(\cosh b+\cosh a)\\
		&\leq \frac{\cosh {\rm d}}{\cosh\frac{{\rm d}}{2}}.
		\end{aligned}
		\end{equation*}
		Consequently, \eqref{{3.2}}'s first inequality follows from
		 $$
		 \begin{cases} d_g(x_0,w)\leq r({\rm d})={\rm arccosh}\Big(\frac{\cosh {\rm d}}{\cosh\frac{{\rm d}}{2}}\Big);\\
		 O\subseteq B_{x_0}(r({\rm d}));\\
	\rho_p(x_0,O)\leq \rho_p\big(x_0,B_{x_0}(r({\rm d}))\big)=r({\rm d}).
	\end{cases}
	$$
	Of course, the equality case of the last inequality occurs at $O=B_{x_0}(r({\rm d}))$.
		
		Finally, we obtain 
		\begin{equation}
		\label{{3.7}}
		r({\rm d})<\frac{{\rm d}}{2}+\ln 2\Leftrightarrow \frac{\cosh {\rm d}}{\cosh\frac{{\rm d}}{2}}<\cosh\Bigg(\frac{{\rm d}}{2}+\ln 2\Bigg).
		\end{equation} 
		Obviously, the right hand of \eqref{{3.7}} is true for any ${\rm d}>0$ - in fact - a redundant computation indicates that $r({\rm d})-\frac{{\rm d}}{2}$ is a monotonically increasing function of ${\rm d}$ and
		tends to $\ln 2$ as ${\rm d}\rightarrow\infty.$ This verifies \eqref{{3.7}}.
	
\end{proof}

\section{Sharp capacitary inequalities for smooth compact domains}\label{s4} 

Due to the monotonicity of 
$$
O\mapsto {\rm cap}_p(K,O)
$$
we can define
$$
{\rm cap}_p(\Omega)=\lim\limits_{R\rightarrow\infty}{\rm cap}_p\big(\Omega,B(R)\big)$$
 as the capacity of the compact (bounded \& closed) domain $\Omega\subseteq\mathbb{H}^{n+1},$
thereby getting
\begin{equation*}
{\rm cap}_p\big(\overline{B}(r)\big)=\left(\int_r^{\infty} (\sigma_n\sinh^n s)^{\frac{1}{1-p}}ds\right)^{1-p}.
\end{equation*}

According to \cite[Theorem 4.1]{fogagnolo2022minimising}, if the $1<p$-Green’s function of a $p$-nonparabolic Riemannian manifold $(\mathbb M,g)$ vanishes
at infinity, then for any bounded domain $\Omega\subseteq \mathbb M$ with smooth boundary, there exists a unique weak solution to the equation
$$
\begin{cases}
\text{div}(|\nabla u|^{p-2}\nabla  u)=0\ &{\rm in}\  \ \mathbb M\setminus \Omega\\ 
u=1 \ &{\rm on}\  \ \partial \Omega\\ 
u(x)\rightarrow 0 \ &{\rm as}\ \ d_g(\Omega,x)\rightarrow\infty,
\end{cases}
$$
and hence $u$ is called the $p$-capacitary potential of $\Omega$. Consequently, the hyperbolic $p$-capacitary potential always exists in $\mathbb  H^{n+1}$ due to that not only the curvature of $\mathbb H^{n+1}$ is negative but also $\mathbb H^{n+1}$ is $1<p$-nonparabolic with its $p$-Green function 
$$
G(x,y)=\int_{d_g(x,y)}^{\infty}(\sigma_n\sinh^n s)^{\frac{1}{1-p}}\,ds\to 0\ \ \text{as}\ \ d_g(x,y)\rightarrow \infty.
$$

The thrid goal of this paper is to sharply control ${\rm cap}_{p}(\Omega)$ from below and above.
\begin{theorem}\label{t41}
 Let 
 $$
 \begin{cases} 
 (p,t)\in (1,\infty)\times (0,1];\\
 \text{$\Omega$ be a compact domain with smooth boundary $\partial\Omega$ in $\mathbb{H}^{n+1}$};\\
 r={\rm Rvol}(\Omega);\\
 \text{$u$ be the hyperbolic $p$-capacitary potential of $\Omega$};\\
 D_{t}=\{x\in \mathbb{H}^{n+1}\setminus \Omega:\ u(x)\geq t\}\cup \Omega;\\
 r_t={\rm Rvol}(D_{t}).
 \end{cases}
 $$
 \begin{itemize}
 	\item [\rm (i)] Either
 \begin{equation}\label{{4.1}}
\text{$\exists\ t_0\in (0,1]$ such that $\left(\frac{{\rm cap}_{p}(\Omega)}{{\rm cap}_{p}\big(\overline{B}(r)\big)}\right)^{\frac{1}{p}}\geq \frac{\sigma_g\big(\partial D_{t_0}\big)}{\sigma_g\big(\partial B(r_{t_0})\big)}\geq 1$},
  \end{equation}
 or 
 \begin{equation}\label{{4.2}}
 \text{$\exists$\ a sequence\ $t_k\to 0$ such that}\  
\left(\frac{{\rm cap}_{p}(\Omega)}{{\rm cap}_{p}\big(\overline{B}(r)\big)}\right)^{\frac{1}{p}}=
\lim\limits_{k\rightarrow\infty}
\left(\frac{\sigma_g\big(\partial D_{t_k}\big)}{\sigma_g\big(\partial B(r_{t_k})\big)}\right)\geq 1.
  \end{equation}
Moreover,  
\begin{equation}
\label{{4.3}} 
\text{${\rm cap}_{p}(\Omega)={\rm cap}_{p}\big(\overline{B}(r)\big)\Leftrightarrow \Omega$ is a hyperbolic geodesic ball}.
\end{equation}

\item[\rm (ii)] Not only
  \begin{equation}\label{{4.4}}
  \text{$\bar{g}=u^{\frac{2(p-1)}{n}}g$\ can be extended continuously to\ $\overline{\mathbb{H}}^{n+1}=\mathbb{H}^{n+1}\cup \mathbb{S}^n$}
  \end{equation}
  but also
	\begin{equation}\label{{4.5}}
	{\rm cap}_p(\Omega)\leq \bigg(\frac{n}{p-1}\bigg)^{p-1}{\rm Vol}(\mathbb{S}^n,\bar{g}|_{\mathbb{S}^n})=\bigg(\frac{n}{p-1}\bigg)^{p-1}\sigma_{\bar{g}|_{\mathbb{S}^n}}(\mathbb{S}^n)
	\end{equation}
holds with inequality becoming equality when $\Omega$ is a hyperbolic geodesic ball.

\end{itemize}
\end{theorem}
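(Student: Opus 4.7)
For part (i), I would closely follow the argument of Theorem \ref{t21}(i), the essential new feature being that the ambient domain is now all of $\mathbb{H}^{n+1}$ so that $\upsilon_g(D_0)=\upsilon_g(E_0)=+\infty$. The existence and regularity of the hyperbolic $p$-capacitary potential $u$ of $\Omega$ are guaranteed by \cite[Theorem 4.1]{fogagnolo2022minimising}, so \eqref{{2.1}} still gives ${\rm cap}_p(\Omega)=\int_{\partial D_t}|\nabla u|^{p-1}\,d\sigma_g$ for a.e. $t\in(0,1]$. Applying the same Hölder/co-area manipulation as in the proof of \eqref{{2.2}}, together with the fact that the radial $p$-capacitary potential $v$ of $\overline{B}(r)$ has $|\nabla v|$ constant on each $\partial E_t$, yields
$$\left(\frac{{\rm cap}_p(\Omega)}{{\rm cap}_p(\overline{B}(r))}\right)^{1/p}\ge \frac{\sigma_g(\partial D_t)}{\sigma_g(\partial E_t)}$$
whenever $h(t):=\upsilon_g(D_t)-\upsilon_g(E_t)=0$ and $h'(t)\ge 0$. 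Since $h(1)=\upsilon_g(\Omega)-\upsilon_g(B(r))=0$, two cases arise: if there is a largest $t_0\in(0,1]$ with $h(t_0)=0$ and non-negative one-sided derivative, we recover \eqref{{4.1}}; otherwise $h$ stays strictly positive on $(0,1)$ near $t=0$, but the matching leading-order divergences of $\upsilon_g(D_{t_k})$ and $\upsilon_g(E_{t_k})$ (determined by the asymptotics of $u$ and $v$ below) force the existence of a sequence $t_k\to 0^+$ along which $h(t_k)\to 0$, and passing to the limit in the above estimate produces \eqref{{4.2}}. The second inequality in either case is the hyperbolic isoperimetric inequality \eqref{{2.4}} applied to $D_{t_0}$ (resp. $D_{t_k}$), and the equality characterisation \eqref{{4.3}} is obtained by Maz'ya's isocapacitary inequality \eqref{{2.6}} exactly as in Theorem \ref{t21}(ii), since equality forces every $D_t$ to be a geodesic ball and hence $\Omega=D_1$ is a geodesic ball.

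For part (ii), statement \eqref{{4.4}} rests on the sharp asymptotic expansion of $u$ at infinity. By sandwich comparison of $u$ from above and below with the radial $p$-capacitary potentials of geodesic balls inscribed in and circumscribing $\Omega$, together with the decay $\kappa_p(d)\sim c_{n,p}\,e^{-nd/(p-1)}$ as $d\to\infty$, one extracts
$$u(x)=\Psi(\theta)\exp\!\Bigl(-\tfrac{n}{p-1}\,d_g(o,x)\Bigr)\bigl(1+o(1)\bigr)\quad\text{as }d_g(o,x)\to\infty,$$
for some positive continuous $\Psi$ on the boundary sphere $\mathbb S^n$, with $\theta$ denoting the direction at infinity and $o\in\Omega$ a fixed reference point. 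Since in the Poincaré disc model $e^{-d_g(o,x)}\sim(1-|x|)/2$ and $\bigl(2/(1-|x|^2)\bigr)^2\sim(1-|x|)^{-2}$ as $|x|\to 1$, one computes that $u^{2(p-1)/n}\bigl(2/(1-|x|^2)\bigr)^2\to \Psi(\theta)^{2(p-1)/n}/4$ uniformly on $\mathbb S^n$; hence $\bar g$ extends continuously to $\overline{\mathbb H}^{n+1}$ with trace $\bar g|_{\mathbb S^n}=\bigl(\Psi^{2(p-1)/n}/4\bigr)\delta_{\mathbb S^n}$ conformal to the round metric.

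Statement \eqref{{4.5}} is the hardest part. I would start from the coarea identity ${\rm cap}_p(\Omega)=\int_{\{u=t\}}|\nabla u|^{p-1}\,d\sigma_g=t^{p-1}\int_{\{u=t\}}(|\nabla u|/u)^{p-1}\,d\sigma_g$ for a.e. $t\in(0,1)$, and pass to the limit $t\to 0^+$. Upgrading \eqref{{4.4}} to a uniform $C^1$-asymptotic, by applying interior Schauder estimates for the $p$-Laplace equation to the exponentially rescaled potential $u(x)\exp(nd_g(o,x)/(p-1))$ near infinity, shows that on $\{u=t\}$ the radial component of $|\nabla u|$ equals $(n/(p-1))\,u\,(1+o(1))$ while the tangential component is of order $O(e^{-d})u$; therefore
$$\int_{\{u=t\}}(|\nabla u|/u)^{p-1}\,d\sigma_g \;\le\; \Bigl(\tfrac{n}{p-1}\Bigr)^{p-1}\sigma_g(\{u=t\})\,\bigl(1+o(1)\bigr)$$
as $t\to 0^+$, which combined with $t^{p-1}\sigma_g(\{u=t\})\to{\rm Vol}(\mathbb S^n,\bar g|_{\mathbb S^n})$ yields \eqref{{4.5}}. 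The principal technical obstacle is thus the uniform $C^1$-control of $u$ at infinity, needed to justify both the limit passage and the sharp constant $(n/(p-1))^{p-1}$; equality in \eqref{{4.5}} requires the tangential contribution to vanish identically in the asymptotic expansion, which forces $\Psi$ to be constant on $\mathbb S^n$, and hence $u$ to be radially symmetric by the strong maximum principle for $p$-harmonic functions, making $\Omega$ a geodesic ball.
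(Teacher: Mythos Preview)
Your approach to part (i) diverges from the paper's and contains a genuine gap. You attempt to run the mean-value-theorem argument of Theorem~\ref{t21} directly on $h(t)=\upsilon_g(D_t)-\upsilon_g(E_t)$, but since $\upsilon_g(D_0)=\upsilon_g(E_0)=+\infty$ you lose the anchor $h(0)=0$. Your fallback claim---that ``matching leading-order divergences'' of $\upsilon_g(D_t)$ and $\upsilon_g(E_t)$ force $h(t_k)\to 0$ along some sequence $t_k\to 0$---is unjustified: two quantities diverging at the same rate can have a difference tending to any limit whatsoever. Even granting $h(t_k)\to 0$, you still need $h'(t_k)\ge 0$ to invoke the H\"older/co-area inequality at $t_k$, and you must explain why the limit yields the \emph{equality} in \eqref{{4.2}} rather than merely an inequality. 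The paper sidesteps all of this by a different reduction: it applies the already-proved Theorem~\ref{t21} to the bounded condensers $\big(\Omega,(D_a)^\circ\big)$ for each $a\in(0,1)$, obtaining a level $\theta_a\in(a,1]$; then, assuming \eqref{{4.1}} fails, it argues $\theta_a\to 0$ and combines the resulting $\limsup$ bound with the $\liminf$ bound \eqref{{4.6}} to produce the equality in \eqref{{4.2}}.

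For \eqref{{4.4}} your sketch is essentially the paper's: sandwich $u$ between two radial $p$-Green functions, deduce $0<C_2\le u/v\le C_1$, and invoke a boundary-regularity argument (the paper cites \cite{schoen1994lectures}) to extend $u/v$ H\"older-continuously to $\mathbb S^n$. For \eqref{{4.5}}, however, you miss the key lemma. The paper does not need any $C^1$ asymptotics of $u$ at infinity: it simply rewrites
\[
{\rm cap}_p(\Omega)=\int_{\{u=t\}}|\nabla\ln u|^{p-1}\,d\sigma_{\bar g}
\]
and applies the global pointwise gradient estimate $|\nabla\ln u|\le n/(p-1)$ of Kotschwar--Ni \cite{2009LOCAL}, valid for positive $p$-harmonic functions on manifolds with ${\rm Ric}\ge -ng$. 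This gives \eqref{{4.5}} in one line after letting $t\to 0$. Your proposed route through interior Schauder estimates for the degenerate $p$-Laplacian to obtain uniform $C^1$ control of $u\,e^{nd/(p-1)}$ is substantially harder, and you yourself flag it as the principal obstacle; it is unnecessary once the Kotschwar--Ni estimate is invoked. Note also that the paper only asserts equality in \eqref{{4.5}} \emph{when} $\Omega$ is a geodesic ball, not the converse you attempt.
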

\begin{proof}
	(i) Suppose that \eqref{{4.1}} does not hold -i.e.- there is
	\begin{equation*}
	\left(\frac{{\rm cap}_{p}(\Omega)}{{\rm cap}_{p}\big(\overline{B}(r)\big)}\right)^{\frac{1}{p}}<
	\frac{\sigma_g\big(\partial D_{t}\big)}{\sigma_g\big(\partial B(r_{t})\big)}\ \ \forall\ \ t\in (0,1].
	\end{equation*}
	Then
	\begin{equation}\label{{4.6}}
	\left(\frac{{\rm cap}_{p}(\Omega)}{{\rm cap}_{p}\big(\overline{B}(r)\big)}\right)^{\frac{1}{p}}\le\liminf_{t\to 0}
	\left(\frac{\sigma_g\big(\partial D_{t}\big)}{\sigma_g\big(\partial B(r_{t})\big)}\right).
	\end{equation}
	For any $a\in (0,1),$ we choose $$u_a(\cdot)=\max\Bigg\{\frac{u(\cdot)-a}{1-a},0\Bigg\}$$ as the hyperbolic $p$-capacitary potential of $\big(\Omega,(D_a)^\circ\big)$ where $(D_a)^\circ$ is the interior of $D_a$. Similarly, for any $\theta\in (0,1]$ we define  
	$$
D_\theta^a=\{x:u_a(x)\geq \theta\}\cup \Omega=\{x:u\geq (1-a)\theta+a\}\cup \Omega=D_{(1-a)\theta+a}.
	$$
	
	By Theorem \ref{t21}, there exists $\theta_0\in (0,1]$ (depending on $a$) such that
	\begin{equation*}
	\left(\frac{{\rm cap}_{p}(\Omega,(D_a)^\circ)}{{\rm cap}_{p}\big(\overline{B}(r), B(r_a)\big)}\right)^{\frac{1}{p}}\geq\frac{\sigma_g\big(\partial D^a_{\theta_0}\big)}{\sigma_g\Big(\partial B\big({\rm Rvol}(D^a_{\theta_0})\big)\Big)}=\frac{\sigma_g\big(\partial D_{\theta_a}\big)}{\sigma_g\big(\partial B(r_{\theta_a})\big)}\ge 1,
	\end{equation*}
	where 
	$$
	\theta_a=(1-a)\theta_0+a\in (0,1].
	$$ 
	Now, letting $a\rightarrow 0$ gives not only $\theta_a\to 0$ but also
	\begin{equation*}
	\left(\frac{{\rm cap}_{p}(\Omega)}{{\rm cap}_{p}\big(\overline{B}(r)\big)}\right)^{\frac{1}{p}}= \lim\limits_{a\rightarrow 0}\left(\frac{{\rm cap}_{p}(\Omega,(D_a)^\circ)}{{\rm cap}_{p}\big(\overline{B}(r),B(r_a)\big)}\right)^{\frac{1}{p}}\geq\limsup\limits_{a\rightarrow 0}\left(\frac{\sigma_g\big(\partial D_{t_a}\big)}{\sigma_g\big(\partial B(r_{\theta_a})\big)}\right).
	\end{equation*}
	This last estimation, along with \eqref{{4.6}}, produces a sequence $t_k=\theta_{a_k}\to 0$ such that
	\begin{equation*}
	\left(\frac{{\rm cap}_{p}(\Omega)}{{\rm cap}_{p}\big(\overline{B}(r)\big)}\right)^{\frac{1}{p}}=\lim\limits_{k\rightarrow \infty}\left(\frac{\sigma_g(\partial D_{t_{k}})}{\sigma_g\big(\partial B(r_{t_{k}})\big)}\right)\geq 1,
	\end{equation*}
	as desired in \eqref{{4.2}}.
	
	Obviously, \eqref{{4.3}} can be verified via a slight modification of the argument for \eqref{{2.3}}.
	
	(ii) First of all, let us point out that one of the major differences between hyperbolic space and Euclidean space is that the hyperbolic space is conformally compact due to its negative curvature. 
	
	Next, since the conformal infinity of $\mathbb{H}^{n+1}$ is actually the standard conformal sphere $\big(\mathbb{S}^n,[g_{\mathbb{S}}]\big)$, we are about to prove such an assertion that
if $\Omega\subseteq\mathbb{H}^{n+1}$ is a bounded smooth domain and $o\in \Omega$ is an interior point as well as $u$ is the hyperbolic $p$-capacitary potential
		of $\Omega$ for some $p>1$,  then for any unit vector $\xi\in T_o\mathbb{H}^{n+1}$ the limit
		\begin{equation}
		\label{{4.7}}
		\hat{u}(\xi)=\lim\limits_{t\rightarrow\infty}e^t\big(u(\exp_o(t\xi))\big)^{\frac{p-1}{n}}
		\end{equation}
not ony exists but also is a positive H\"older continuous function on $\mathbb{S}^n.$
	
	Indeed, if
		\begin{equation*}
		v(x)=G(o,x)=\int_{d_g(o,x)}^{\infty}(\sigma_n\sinh^n s)^{\frac{1}{1-p}}\,ds,
		\end{equation*}
		then
		$$
		\text{div}\big(|\nabla v(x)|^{p-2}\nabla v(x)\big)=0\ \ \forall\ \ x\neq o.
		$$
		For sufficiently large $R>0,$ let $u_R$ be the hyperbolic $p$-capacitary potential of the condenser $(\Omega, B_o(R)).$ Then up to a subsequence, there holds (cf. \cite[Appendix A]{fogagnolo2022minimising})
		$$
		u_R\rightarrow u\ \ \text{as}\ \ R\rightarrow\infty.
		$$ 
		On the other hand, the strong comparison principle
		for $u_R\ \&\ v$ indicates that $u_R\leq C_1 v$ for some $C_1>0$ independent of $R.$ As a consequence, we get 
		$$\frac{u}{v}\leq C_1\ \ \text{in}\ \ 
		\mathbb{H}^{n+1}\setminus\Omega.
		$$ 
		Similary, we can also obtain 
		$$
		\frac{u}{v}\geq C_2>0\ \ \text{in}\ \ 
		\mathbb{H}^{n+1}\setminus\Omega.
		$$ 
		Now that 
		$$
		0<C_2\leq \frac{u}{v}\leq C_1\ \ \text{in}\ \ \mathbb{H}^{n+1}\setminus\Omega.
		$$ 
		Thus a demonstration similar to the argument for \cite[Proposition 2.6]{schoen1994lectures} indicates that $\frac{u}{v}$ can be continuously extended to $\partial\mathbb{H}^{n+1}=\mathbb{S}^n$ and is not only H\"older continuous but also
		great than $C_2>0.$ Nevertheless, the existence of \eqref{{4.7}} follows from the fact that there is a constant $C_{n,p}>0$ such that
		$$v\big(\exp_o(t\xi)\big)=\int_{t}^{\infty}(\sigma_n\sinh^n s)^{\frac{1}{1-p}}ds\sim C_{n,p} e^{\frac{n t}{1-p}} \ \ \text{as}\ \ t\rightarrow\infty.$$
		
Last of all, as an application of \eqref{{4.7}}'s existence and continuity, we can utilize $u$ to construct a defining function. More concretely, let 
$$
g=dr^2+\sinh^2r g_\mathbb{S}\ \ \text{with}\ \ r=d_g(o,\cdot)
$$ be the hyperbolic metric. Consider the defining function $\rho=e^{-r}$ and the conformal compactification
	$$g_0=\rho^2 g=d\rho^2+2^{-2}(1-\rho^2)^2g_{\mathbb{S}^n}.
	$$
	Then $g_0$ is smooth on 
	$$\overline{\mathbb{H}}^{n+1}=\mathbb{H}^{n+1}\cup\partial\mathbb{H}^{n+1}.
	$$ 
	Now, along with  
	$$
	\begin{cases} \bar{\rho}=u^{\frac{p-1}{n}};\\
	\bar{g}=\bar{\rho}^2g=u^{\frac{2(p-1)}{n}}e^{2r}g_0,
	\end{cases}
	$$
	\eqref{{4.7}}'s continuity implies that $\bar{g}$ is H\"older continuous on $\overline{\mathbb{H}}^{n+1}$ and its boundary metric is
	$$\hat{g}=\bar{g}|_{{\mathbb{S}}^n}=\hat{u}^2g_0|_{\mathbb{S}^n}=2^{-2}{\hat{u}^2}g_{\mathbb{S}^n}.$$
	Accordingly, for any $s\in (0,1)$ we obtain the desired estimation
		\begin{equation*}
		\begin{aligned}
		{\rm cap}_p(\Omega)&=\int_{\{u=s\}}|\nabla u|^{p-1} d\sigma_g\\
		&=\int_{\{u=s\}}|\nabla\ln u|^{p-1}d\sigma_{\bar{g}}
		\\&=\lim\limits_{t\rightarrow 0}\int_{\{u=t\}}|\nabla\ln u|^{p-1}d\sigma_{\bar{g}}\\
		&\leq \left(\frac{n}{p-1}\right)^{p-1}\lim\limits_{t\rightarrow 0} \int_{\{u=t\}}d\sigma_{\bar{g}}
		\\&=\left(\frac{n}{p-1}\right)^{p-1} {\rm Vol}(\mathbb{S}^n,\hat{g})
	\\&=\left(\frac{n}{p-1}\right)^{p-1} \sigma_{\hat{g}}(\mathbb{S}^n),
		\end{aligned}
		\end{equation*}
		where we have used the known inequality (cf. \cite[Theorem 1.1]{2009LOCAL})
		$$|\nabla\ln u|\leq\frac{n}{p-1},
		$$
		whose equality is achievable when $\Omega$ is a hyperbolic geodesic ball and so is \eqref{{4.5}}'s equality. 
		
\end{proof}

\section{Sharp capacitary inequalities for smooth horospherical convex domains}\label{s5}

\par Another look at Theorem \ref{t41}(i) leads us to consider the concept that if $p\in (1,\infty)$ and $\Omega$ is a compact domain with smooth boundary $\partial\Omega$ in $\mathbb{H}^{n+1}$ then:
	\begin{itemize}
\item the $p$-capacity-radius of $\Omega$ is defined by
  $$ 
  {\rm Rcap}_p(\Omega)=R_0\ \ \text{with}\ \ {\rm cap}_p(\Omega)={\rm cap}_p\big(\overline{B}(R_0)\big);
  $$
  
\item the relative volume of $\Omega$ is defined by
$$
  {\rm RV}(\Omega)=\lim\limits_{R\rightarrow\infty} \left(\frac{{\upsilon}_g\big(\{x\in \mathbb{H}^{n+1}:\ d_g(x,\Omega)\leq R\}\big)}{\upsilon_g\big(B(R)\big)}\right);
$$
 
\item the relative $p$-capacity of $\Omega$ is defined by
 $$
   {\rm RC}_p(\Omega)=\lim\limits_{R\rightarrow\infty}\left(\frac{{\rm cap}_p\big(\{x\in \mathbb{H}^{n+1}:{\rm dist}_g(x,\Omega)\leq R\}\big)}{{\rm cap}_p\big(B(R)\big)}\right).
 $$
\end{itemize}

The fourth goal of this paper is to utilize some ideas developed within \cite{jin2024willmore, xiao2021flux} to obtain the following sharp capacitary estimation.

\begin{theorem}\label{t51} Let $p\in (1,\infty)$ and
  $\Omega$ be a compact subdomain of $\mathbb{H}^{n+1}$ with the smooth closed horospherical convex boundary $\partial\Omega$.
  
  \begin{itemize}
  	\item[\rm(i)] There is the inequality
  \begin{equation}\label{{5.1}}
 {\rm Rcap}_p(\Omega)\leq n^{-1}\ln {\rm RV}(\Omega)
 \end{equation}
 with equality iff $\Omega$ is a hyperbolic geodesic ball.
 
 \item[\rm(ii)] There is the inequality
 \begin{equation}
 \label{{5.2}}
{\rm Rvol}(\Omega)\le n^{-1}\ln{\rm RV}(\Omega)=n^{-1}\ln{\rm RC}_p(\Omega)
 \end{equation}
 with its inequality becoming an equality iff $\Omega$ is a hyperbolic geodesic ball.
\end{itemize}
\end{theorem}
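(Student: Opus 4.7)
The strategy is to analyze the parallel flow $\Omega_R:=\{x\in\mathbb H^{n+1}:d_g(x,\Omega)\le R\}$, which preserves and strengthens horospherical convexity. If $\partial\Omega$ has principal curvatures $\lambda_1,\ldots,\lambda_n\ge 1$, then $\partial\Omega_R$ is smooth for every $R\ge 0$ with principal curvatures $\lambda_i(R)=\frac{\lambda_i+\tanh R}{1+\lambda_i\tanh R}\ge 1$ and area element factor $\prod_i(\cosh R+\lambda_i\sinh R)\sim 2^{-n}e^{nR}\prod_i(1+\lambda_i)$ as $R\to\infty$. Feeding these asymptotics into the tube formula $\upsilon_g(\Omega_R)=\upsilon_g(\Omega)+\int_0^R\sigma_g(\partial\Omega_s)\,ds$ together with $\upsilon_g(B(R))\sim\sigma_n(n2^n)^{-1}e^{nR}$ yields the closed form
$$\mathrm{RV}(\Omega)=\sigma_n^{-1}\int_{\partial\Omega}\prod_{i=1}^n(1+\lambda_i)\,d\sigma_g,$$
which specializes to $\mathrm{RV}(B(R_0))=e^{nR_0}$ and so returns $n^{-1}\ln\mathrm{RV}(B(R_0))=R_0=\mathrm{Rcap}_p(B(R_0))=\mathrm{Rvol}(B(R_0))$, matching the equality cases of both (i) and (ii).

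For part (i) proper, I would show that $R\mapsto\mathrm{Rcap}_p(\Omega_R)-R$ is nondecreasing on $[0,\infty)$. Since $\mathrm{cap}_p(\overline B(r))\sim\sigma_n 2^{-n}\bigl(\tfrac{n}{p-1}\bigr)^{p-1}e^{nr}$ as $r\to\infty$, this shifted quantity converges to $n^{-1}\ln\mathrm{RC}_p(\Omega)$, giving $\mathrm{Rcap}_p(\Omega)\le n^{-1}\ln\mathrm{RC}_p(\Omega)$. The monotonicity rests on comparing the hyperbolic $p$-capacitary potential of $\Omega_R$ with that of the ``equivalent ball'' $B(\mathrm{Rcap}_p(\Omega)+R)$ via the comparison principle for $\mathrm{div}(|\nabla u|^{p-2}\nabla u)=0$; horospherical convexity is exactly what keeps the level sets of the capacitary potential horospherically convex and forces the parallel flow to push $\Omega_R$ toward a ball in a controlled way, following the flux-type machinery of \cite{jin2024willmore, xiao2021flux}. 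To close the chain with the identity $\mathrm{RV}=\mathrm{RC}_p$ in (\ref{{5.2}}), I would apply Theorem~\ref{t41}(ii) to each $\Omega_R$: the conformal-boundary volume $\mathrm{Vol}(\mathbb S^n,\bar g|_{\mathbb S^n})$ - which governs the leading asymptotic of $\mathrm{cap}_p(\Omega_R)$ through $\hat u$ - is matched in the $R\to\infty$ limit with the volumetric constant $\sigma_n^{-1}\int_{\partial\Omega}\prod_i(1+\lambda_i)\,d\sigma_g$, since both record the same asymptotic shape of $\Omega$ at conformal infinity.

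For part (ii), the first inequality follows from (i) combined with the hyperbolic $p$-isocapacitary/Schwarz symmetrization estimate $\mathrm{cap}_p(\Omega)\ge\mathrm{cap}_p(\overline B(\mathrm{Rvol}(\Omega)))$, which yields $\mathrm{Rvol}(\Omega)\le\mathrm{Rcap}_p(\Omega)$; the equality $\mathrm{RV}(\Omega)=\mathrm{RC}_p(\Omega)$ was established in the previous paragraph. Rigidity is handled as in Theorem~\ref{t21}: equality tightens the monotonicity to constancy and the isocapacitary inequality to its equality case, which forces the level sets of the capacitary potential - and hence $\partial\Omega$ - to be hyperbolic geodesic spheres.

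\textbf{Main obstacle.} The crux is the monotonicity of $\mathrm{Rcap}_p(\Omega_R)-R$ along the parallel flow. This is exactly where horospherical convexity is indispensable: without $\lambda_i\ge 1$ the parallel sets can fail to be smooth for large $R$, the horospherical convexity of the capacitary level sets collapses, and the $p$-harmonic comparison with the equivalent ball breaks down. A secondary delicate point is the identification $\mathrm{RV}=\mathrm{RC}_p$ via matching leading asymptotic constants; this alignment again hinges on horospherical convexity together with the continuous conformal extension of $u^{2(p-1)/n}g$ furnished by Theorem~\ref{t41}(ii).
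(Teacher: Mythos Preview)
Your proposal has a genuine gap at its declared crux. You stake part~(i) on the monotonicity of $R\mapsto\mathrm{Rcap}_p(\Omega_R)-R$, justified by the comparison principle together with the assertion that ``horospherical convexity is exactly what keeps the level sets of the capacitary potential horospherically convex.'' That last assertion is the real content, and you give no argument for it; convexity preservation for level sets of $p$-harmonic functions on curved manifolds is a hard and largely open question, and nothing in the cited flux machinery of \cite{jin2024willmore,xiao2021flux} supplies it. Without that, the comparison with the ``equivalent ball'' has no anchor and the monotonicity claim is unproved. Your route to the identity $\mathrm{RV}=\mathrm{RC}_p$ is also incomplete: Theorem~\ref{t41}(ii) gives only an \emph{upper} bound $\mathrm{cap}_p\le(\tfrac{n}{p-1})^{p-1}\mathrm{Vol}(\mathbb S^n,\hat g)$, so it cannot by itself produce the two-sided matching you need.

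The paper avoids the level-set convexity issue entirely by tracking a different quantity along the parallel flow: the \emph{surface-area radius} $f^{-1}(\sigma_g(\partial\Omega_s))$ rather than the capacity radius. The monotonicity $f^{-1}(\sigma_g(\partial\Omega_s))-s\nearrow n^{-1}\ln\mathrm{RV}(\Omega)$ is purely integral-geometric: it comes from the Steiner formula for $\sigma_g(\partial\Omega_s)$ combined with the hyperbolic Alexandrov--Fenchel inequalities for each normalized mean curvature $p_k$, and this is exactly where horospherical convexity is spent. Capacity enters only afterward, through the P\'olya--Szeg\H{o} upper bound $\mathrm{cap}_p(\Omega)\le\bigl(\int_0^\infty\sigma_g(\partial\Omega_s)^{1/(1-p)}\,ds\bigr)^{1-p}$; inserting $\sigma_g(\partial\Omega_s)\le f\bigl(s+n^{-1}\ln\mathrm{RV}(\Omega)\bigr)$ gives \eqref{{5.1}} directly. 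For $\mathrm{RV}=\mathrm{RC}_p$ the paper uses the flux inequality on $\Omega_R$ for $\limsup\le\mathrm{RV}$ and the isocapacitary inequality of Theorem~\ref{t41}(i) (not (ii)) for $\liminf\ge\mathrm{RV}$; this identity in fact holds for any bounded smooth $\Omega$, with no convexity needed. Your handling of the first inequality in part~(ii) via $\mathrm{Rvol}\le\mathrm{Rcap}_p$ is fine in principle, but it inherits the gap in part~(i).
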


\begin{proof} Below is a list of the essential properties of ${\rm RV}(\Omega)$ (cf. \cite{jin2024willmore} for more details):
\begin{itemize}
  \item 
  $$
  {\rm RV}(\Omega)=\lim\limits_{R\rightarrow\infty} \frac{{\upsilon}_g\big(\{x\in \mathbb{H}^{n+1}:\ d_g(x,\Omega)\leq R\}\big)}{\sigma_n\int_0^R \sinh^ns\,ds};
  $$
  \item 
  $$
  \Omega_1\subseteq \Omega_2\Rightarrow {\rm RV}(\Omega_1)\leq {\rm RV}(\Omega_2);
  $$
  \item $${\rm RV}(B_x(r))=e^{nr}\ \ \forall\ \ (x,r)\in \mathbb{H}^{n+1}\times(0,\infty);
  $$
  \item 
  $${\rm RV}(\Omega)\leq \int_{\partial\Omega\bigcap\{H\geq-n\}}\Bigg(1+\frac{H}{n}\Bigg)^n\,\frac{ d\sigma_g}{\sigma_n}\ \ \text{with}\ \ H=\text{the mean curvature};
  $$
  \item 
$${\rm RV}(\Omega)={\sigma_n}^{-1}{\rm Vol}(\mathbb S^n,\hat {g})\ \ \text{with}\ \ \hat{g}=2^{2}e^{-2d_g(\Omega,\cdot)}g_{\mathbb{H}^{n+1}}\big|_{\mathbb {S}^n}.
$$
\end{itemize}

(i) We begin with proving the assertion that if not only $\Omega\subseteq\mathbb{H}^{n+1}$ is a smooth bounded domain with horospherical
convex boundary $\partial \Omega$ but also
$$
\begin{cases}
f(s)=\sigma_n\sinh^ns=\sigma_g\big(\partial B(s)\big);\\
\Omega_s=\big\{x\in\mathbb{H}^{n+1}:\ d_g(x,\Omega)\leq s\big\},
\end{cases}
$$
then 
$$
[0,\infty)\ni s\mapsto f^{-1}\big(\sigma_g(\partial\Omega_s)\big)-s
$$ is a monotonically increasing function with
\begin{equation}
\label{{5.3}}
f^{-1}\big(\sigma_g(\partial \Omega)\big)\le f^{-1}\big(\sigma_g(\partial\Omega_s)\big)-s\le n^{-1}\ln {\rm RV}(\Omega)\ \ \forall\ \ s\in [0,\infty).
\end{equation}
Here $\Omega$ is said to be horospherical convex provided that $\Omega$ is contained in the enclosed ball of some horosphere through any given $x\in\partial\Omega$ - or equivalently - all the principal curvatures $\kappa_i$ of its boundary $\partial\Omega$ satisfy $$\min\{\kappa_1,\cdots,\kappa_n\}\geq 1.$$

The proof of this last assertion consists of three steps.

Firstly, if $\Omega$ is convex, then $\kappa_i\geq 0$, and hence the volume of level set $\partial\Omega_s$ can be accurately given by the Steiner
formula
\begin{equation*}
\sigma_g(\partial\Omega_s)= \sum_{k=0}^{n}\sinh^{k} s\cosh^{n-k} s \int_{\partial\Omega}{{n}\choose{k}} p_k\, d\sigma_g,
\end{equation*}
where $$p_k=\frac{1}{{{n}\choose{k}}}\sum\limits_{i_1<i_2<\cdots<i_k}\kappa_{i_1}\kappa_{i_2}\cdots\kappa_{i_k}$$ 
is the normalized $k-$th order
mean curvature. Moreover, when $\Omega$ is horospherically convex, by the hyperbolic Alexandrov-Fenchel inequality (cf. \cite{2018Hyperbolic, 2013Hyperbolic, li2014geometric, wang2014isoperimetric}) we have
\begin{equation*}
\int_{\partial\Omega}p_k\, d\sigma_g\geq \sigma_n\left(\left(\frac{\sigma_g(\partial\Omega)}{\sigma_n}\right)^{\frac{2}{k}}+\left(\frac{\sigma_g(\partial\Omega)}{\sigma_n}\right)^{\frac{2(n-k)}{kn}}\right)^{\frac{k}{2}}\ \ \forall\ \ k\in\{1,2,\cdots,n\}
\end{equation*}
with equality iff $\partial\Omega$ is a hyperbolic geodesic sphere. Also, if  $$
r_0=f^{-1}\big(\sigma_g(\partial\Omega)\big)-i.e.- \sigma_g(\partial\Omega)=\sigma_n\sinh^nr_0,
$$ 
then
\begin{align}\label{{5.4}}
\sigma_g(\partial\Omega_s)&\geq \sigma_g(\partial\Omega)\cosh^ns+ \sum_{k=1}^{n}\frac{\sinh^{k} s\cosh^{n-k}s  {{n}\choose{k}}\sigma_n}{\left(\left(\frac{\sigma_g(\partial\Omega)}{\sigma_n}\right)^{\frac{2}{k}}+\left(\frac{\sigma_g(\partial\Omega)}{\sigma_n}\right)^{\frac{2(n-k)}{kn}}\right)^{-\frac{k}{2}}}
\\
&= \sigma_g(\partial\Omega)\cosh^ns+ \sum_{k=1}^{n}\frac{\sinh^{k} s\cosh^{n-k}s {{n}\choose{k}}
\sigma_g(\partial\Omega)}{\left(1+\left(\frac{\sigma_g(\partial\Omega)}{\sigma_n}\right)^{-\frac{2}{n}}\right)^{-\frac{k}{2}}}\notag\\
&= \sigma_g(\partial\Omega)\cosh^ns+ \sigma_g(\partial\Omega)\sum_{k=1}^{n} {{n}\choose{k}} \sinh^{k} s\cosh^{n-k}s \coth^k r_0\notag\\
&=\sigma_g(\partial\Omega)\sum_{k=0}^{n}{{n}\choose{k}} (\sinh s \coth r_0)^k\cosh^{n-k}s\notag\\
&=\sigma_g(\partial\Omega)\left(\frac{\sinh^n(s+r_0)}{\sinh^n r_0}\right)\notag\\
&=f(s+r_0)\notag\\
&=f\big(s+f^{-1}(\sigma_g(\partial\Omega))\big),\notag
\end{align}
and hence 
$$
f^{-1}\big(\sigma_g(\partial\Omega_s)\big)\geq s+f^{-1}\big(\sigma_g(\partial\Omega)\big).
$$
Furthermore, for any $0<t<s,$ noticing the formula $$\Omega_{s}=(\Omega_{t})_{s-t},$$
we can similarly obtain 
\begin{equation*}
  f^{-1}\big(\sigma_g(\partial\Omega_{s})\big)\geq s-t+f^{-1}\big(\sigma_g(\partial\Omega_{t})\big)
\end{equation*}
provided that $\Omega_{t}$ is also horospherical convex. 

Secondly, let $h_{ij}(t)_{1\leq i,j\leq n}$ be the second fundamental
form of $\partial\Omega_t$ and $\lambda(t)$ be the smallest eigenvalue of $h_{ij}(t).$ Fix a $t_0>0,$ and assume that $V$ is the unit eigenvector
of $\lambda(t_0).$ Then 
$$V^Th(t)V-\lambda(t)\geq 0$$ 
whose  equality holds at $t_0.$ Hence
$$
\begin{cases}\lambda'(t_0)=V^Th'(t_0)V;\\
\lambda'(t_0)+\lambda^2(t_0)=V^Th'(t_0)V+V^Th^2(t_0)V=V^T\big(h'(t_0)+h^2(t_0)\big)V=1.
\end{cases}
$$
Here we have used the Riccati equation in the hyperbolic space:
$$h'_{ij}(t)+\sum\limits_{k=1}^n h_{ik}(t)h_{kj}(t)=-R_{titj}=1.
$$
Consequently, we get 
\begin{equation*}
\lambda'(t)+\lambda^2(t)=1\le \lambda(0),
\end{equation*}
thereby deriving 
$$
\lambda(t)\geq 1\ \ \forall\ \ t\geq0.
$$
Therefore, $\Omega_{t}$ is also horospherically convex for all $t\geq 0$. This in turn implies that 
$$
(0,\infty)\ni s\mapsto f^{-1}(\sigma_g(\partial\Omega_s))-s
$$ 
is a monotonically increasing function.

Thirdly, in order to obtain the upper bound of  $$f^{-1}(\sigma_g(\partial\Omega_s))-s,$$ we only need to make an estimation as $s$ tends to infinity. Recall that
\begin{equation*}
  {\rm RV}(\Omega)=\lim\limits_{s\rightarrow\infty}\left(\frac{\sigma_g(\partial\Omega_s)}{f(s)}\right).
\end{equation*}
So, if 
$$
\tau(s)=f^{-1}(\sigma_g(\partial\Omega_s))-s,
$$ 
then
$$
{\rm RV}(\Omega)=\lim\limits_{s\rightarrow\infty}
\left(\frac{f\big(\tau(s)+s\big)}{f(s)}\right)=\lim\limits_{s\rightarrow\infty} e^{n\tau(s)}
$$
whence validating \eqref{{5.3}}.

Now, we verify \eqref{{5.1}} via using the classical methods from \cite{polya1951isoperimetric}. As a matter of fact, we always have
 \begin{equation}\label{{5.5}}
   {\rm cap}_p(\Omega)\leq \left(\int_0^{\infty} \sigma_g(\partial\Omega_s)^{\frac{1}{1-p}}ds\right)^{1-p}.
 \end{equation}
 Since \eqref{{5.3}} reveals
 $$\sigma_g(\partial\Omega_s)\leq f\big(s+n^{-1}\ln {\rm RV}(\Omega)\big),
 $$ 
 this last inequality is placed into \eqref{{5.5}} to deduce
  \begin{equation}\label{{5.6}}
   {\rm cap}_p(\Omega)\leq \left(\int_{n^{-1}\ln {\rm RV}(\Omega)}^{\infty} f(s)^{\frac{1}{1-p}}ds\right)^{1-p}.
 \end{equation}
 Consequently,  \eqref{{5.6}}, plus the definition of ${\rm Rcap}_p(\Omega)$, derives \eqref{{5.1}} at once. 
 
 Clearly, if $\Omega$ is a hyperbolic geodesic ball, then \eqref{{5.1}}'s equality is true. 
 Conversely, if the equality of \eqref{{5.1}} holds, then 
 $$
 \sigma_g(\partial\Omega_s)= f\big(s+n^{-1}\ln {\rm RV}(\Omega)\big)\ \ \forall\ \ s\in (0,\infty),
 $$
 and hence 
 $$
 f^{-1}(\sigma_g(\partial\Omega_s))-s
 $$ 
 is a constant function since it is a monotonically increasing function. This last constant reveals that each inequality of \eqref{{5.4}} becomes an equality. So, $\Omega$ must be a geodesic ball.

(ii) On one hand, upon letting $p\rightarrow 1$ in \eqref{{5.5}}, we get that
$$
 {\rm cap}_1(\Omega)\leq \lim\limits_{p\rightarrow 1}\left(\int_0^{\infty} \sigma_g(\partial\Omega_s)^{\frac{1}{1-p}}ds\right)^{1-p}
=\min\limits_{s\geq 0} \sigma_g(\partial\Omega_s)
=  \sigma_g(\partial\Omega)$$
Meanwhile, Theorem \ref{t41}(i) implies 
$${\rm cap}_{1}(\Omega)\geq {\rm cap}_{1}\Big(\overline{B}\big({\rm Rvol}(\Omega)\big)\Big)=\sigma_g\Big(\partial B\big({\rm Rvol}(\Omega)\big)\Big)=f\big({\rm Rvol}(\Omega)\big).
$$
Thus, the last two inequalities derive
$$
{\rm Rvol}(\Omega)\leq f^{-1}\big({\rm cap}_{1}(\Omega)\big)={\rm Rcap}_{1}(\Omega)\leq f^{-1}(\sigma_g(\partial\Omega))\leq n^{-1}\ln{\rm RV}(\Omega),
$$ 
thereby reaching \eqref{{5.2}}'s inequality which becomes an equality iff $\Omega$ is a hyperbolic geodesic ball due to the last estimation.

On the other hand, with the help of \cite{jin2022relative}'s argument for \eqref{{5.2}} 
whenever $\Omega$ {is either a point or a geodesic ball in any asymptotically hyperbolic Einstein manifold}, we prove that 
if $\Omega$ is just a bounded domain with smooth boundary $\partial\Omega$ in $\mathbb H^{n+1}$, then the second equation of \eqref{{5.2}} is still valid.
 
For convenience, we once again use the symbol 
$$
\Omega_R=\big\{x\in \mathbb{H}^{n+1}:\ d_g(x,\Omega)\leq R\big\}.
$$
By the flux method, for any $R>0$ we get
  \begin{align}\label{{5.7}}
  \limsup\limits_{R\rightarrow\infty}\Bigg(\frac{{\rm cap}_p(\Omega_R)}{{\rm cap}_p(B(R))}\Bigg)&
  \leq \limsup\limits_{R\rightarrow\infty}\left(\frac{\left(\int_R^{\infty} \sigma_g(\partial\Omega_s)^{\frac{1}{1-p}}ds\right)^{1-p}}{\left(\int_R^{\infty} f(s)^{\frac{1}{1-p}}ds\right)^{1-p}}\right)\\
  &=\left(\lim\limits_{R\rightarrow\infty}\Bigg(\frac{\int_R^{\infty} \sigma_g(\partial\Omega_s)^{\frac{1}{1-p}}ds}{\int_R^{\infty} f(s)^{\frac{1}{1-p}}ds}\Bigg)\right)^{1-p}\notag
  \\
  &=\left(\lim\limits_{R\rightarrow\infty}\Bigg(\frac{\sigma_g(\partial\Omega_R)^{\frac{1}{1-p}}}{f(R)^{\frac{1}{1-p}}}\Bigg)\right)^{1-p}\notag
  \\ &=\lim\limits_{R\rightarrow\infty}\left(\frac{\sigma_g(\partial\Omega_R)}{f(R)}\right)\notag \\
  &={\rm RV}(\Omega).\notag
  \end{align}
 
 In order to establish the reversed estimation of \eqref{{5.7}}, let $\tilde{R}={\rm Rvol}\big(\Omega)_R\big)$. Then Theorem \ref{t41}(i) gives
\begin{equation}\label{{5.8}}
\frac{{\rm cap}_p(\Omega_R)}{{\rm cap}_p\big(\overline{B}({R})\big)}\geq\frac{{\rm cap}_p\big(\overline{B}(\tilde{R})\big)}{{\rm cap}_p\big(\overline{B}({R})\big)}
\end{equation}
Note that \cite[(6.3)]{jin2022relative} implies
$${\rm cap}_p\big(\overline{B}({R})\big)={2^{-n}}\left(\frac{n}{p-1}\right)^{p-1}\sigma_n e^{nR}+{\smallO}(e^{nR})\ \ \text{as}\ \ R\rightarrow\infty.
$$
So
$${\rm cap}_p\big(\overline{B}({R})\big)=n\sigma_n\left(\frac{n}{p-1}\right)^{p-1}\upsilon_g\big(B(R)\big)+{\smallO}\Big(\upsilon_g\big(B(R)\big)\Big) \ \ \text{as}\ \  R\rightarrow\infty.
$$
Furthermore, by \eqref{{5.8}} we achieve
\begin{align}\label{{5.9}}
\liminf\limits_{R\rightarrow\infty}\left(\frac{{\rm cap}_p(\Omega_R)}{{\rm cap}_p\big(\overline{B}({R})\big)}\right)
 & \geq\liminf\limits_{R\rightarrow\infty}\left(\frac{n\sigma_n\big(\frac{n}{p-1}\big)^{p-1}\upsilon_g\big(B(\tilde{R})\big)+{\smallO}\Big(\upsilon_g\big(B(\tilde{R})\big)\Big)
}{n\sigma_n\big(\frac{n}{p-1}\big)^{p-1}\upsilon_g\big(B(R)\big)+{\smallO}\Big(\upsilon_g\big(B(R)\big)\Big)}\right)
\\ &=\liminf\limits_{R\rightarrow\infty}\left(\frac{\upsilon_g\big(B(\tilde{R})\big)+{\smallO}\Big(\upsilon_g\big(B(\tilde{R})\big)\Big)}{\upsilon_g\big(B({R})\big)+{\smallO}\Big(\upsilon_g\big(B({R})\big)\Big)}\right)\notag
\\ &=\liminf\limits_{R\rightarrow\infty}\left(\frac{\upsilon_g(\Omega_R)+o\big(\upsilon_g(\Omega_R)\big)}{\upsilon_g\big(B({R})\big)+{\smallO}\Big(\upsilon_g\big(B({R})\big)\Big)}\right)\notag
\\ &=\lim\limits_{R\rightarrow\infty}\left(\frac{\upsilon_g(\Omega_R)}{\upsilon_g\big(B({R})\big)}\right)\notag \\
&={\rm RV}(\Omega).\notag 
\end{align}

Accordingly, a combination of \eqref{{5.7}}\&\eqref{{5.9}} derives \eqref{{5.2}}'s second identification.
 \end{proof}

\end{document}